        \newtheorem{theorem}{Theorem}
        \newtheorem{lemma}[theorem]{Lemma}
        \newtheorem{corollary}[theorem]{Corollary}
        \newtheorem{remark}[theorem]{Remark}
\title{Asymptotic expansions for high-contrast elliptic equations}
\author{ 
Victor M. Calo\thanks{ Applied Mathematics \& Computational Science and Earth Sciences \& Engineering, King Abdullah University of Science and Technology, Thuwal 23955-6900, Kingdom of Saudi Arabia}
\and Yalchin Efendiev  
\thanks{Department of Mathematics, Texas A \& M University, College Station, TX 77843
 (\tt{efendiev@math.tamu.edu})}
\and
Juan Galvis\thanks{Department of Mathematics, Texas A \& M University, College Station, TX 77843 ({\tt jugal@math.tamu.edu})}
}
\date{}
\begin{document}
\maketitle

\pagestyle{myheadings}
\thispagestyle{plain}
\markboth{hc.tex}{}

\section{Abstract}

In this paper, we present a high-order expansion for elliptic equations in 
high-contrast media. The background conductivity 
is taken to be one and we assume the medium contains
high (or low) conductivity inclusions.
We derive an asymptotic expansion with 
respect to the contrast and provide a procedure
to compute the terms in the expansion. 
The computation of the expansion does not depend
on the contrast which is important for
simulations. The latter allows avoiding 
increased mesh resolution around high
conductivity features. 
This work is partly motivated by our
earlier work in~\cite{ge09_1} where we design
efficient numerical procedures for solving
high-contrast problems. These multiscale approaches 
require local solutions and our proposed high-order
expansion can be used to approximate these local solutions inexpensively.
In the case of a large-number of inclusions,
the proposed analysis can help to design localization
techniques for computing the terms in the expansion.
In the paper, we present a rigorous analysis
of the proposed high-order expansion and estimate
the remainder of it. We consider both high and
low conductivity inclusions. 

\section{Introduction}
The mathematical analysis and numerical analysis of partial 
differential equations 
in high-contrast and multiscale media are important for many 
practical applications. For instance, in porous media applications, 
the permeability of  subsurface regions is described 
as a quantity with 
high-contrast and multiscale features.
A main goal is to understand the 
effects and complexity related to this multiscale variation 
and high-contrast in the coefficients.
This is specially important 
for the computation of numerical solutions and 
quantities of interest.
Many tools and methods have been developed and used to study 
high contrast problems. We mention  few recent 
works where  numerical methods are designed that target 
problems with difficult variation in the coefficients.
The numerical analysis of these methods require 
accurate descriptions of the variation of the coefficients.
In~\cite{egw10,ceg10,cgh09,ce10,oz11,bl10,bo09_1,bo09,vsg,AMG2} 
 multiscale methods for problems with high-contrast 
coefficients are described.
For domain decomposition methods with discontinuous 
coefficients  we mention~\cite{marcus1,Drjya,Nepom-91, tw,Tarekbook} 
and references 
therein. Some domain decomposition techniques for 
multiscale partial differential equations with complicated 
variations in the coefficients are developed
in~\cite{nataf11,eglw11,Graham1,rob_clement,ge09_1,ge09_2,ge09_3}.
Additionally, we mention~\cite{AMG1,egvdd20,XuZikatanov} among others, 
which focus on multilevel methods targeting problems with 
high-contrast and discontinuous coefficients. 
Numerical methods based on different asymptotic 
analysis  are detailed 
in~\cite{BerlyandNovikov, EILRW08,klapper,DangQuang,bp98}.

In the case of elliptic problems with oscillating coefficients and 
bounded contrast, multiscale 
methods and homogenizations techniques have been 
 successfully applied to study solutions of elliptic 
differential equations (c.f.,~\cite{eh09,blp78,pavliotis-stuart}
 and references therein). 
Many homogenization and multiscale methods start with 
the derivation of an asymptotic expansion for  the 
solution of the partial differential equation.
The expansion is written in terms of the problem parameter(s): e.g., 
the period in the case of oscillating periodic coefficients.
The asymptotic expansion is then used to study 
the problem at hand. 

In this paper and in the same spirit, 
we derive  asymptotic expansions for the solutions 
of elliptic problems with high-contrast. In this case 
the parameter to be consider is the contrast in the coefficient. 
In particular, we consider the problem
\begin{equation}\label{eq:problem-strong}
-\mbox{div}(\kappa(x)\nabla u )=f, \mbox{   in } D
\end{equation}
with Dirichlet data given by $u=g$ on $\partial D$.  
We assume that $\kappa(x)>0$. 
The contrast 
in the coefficient, 
$\|\kappa\|_{L^\infty(D)}\|\kappa^{-1}\|_{L^\infty(D)}$,
is the important parameter considered here.
For the analysis, we
 consider a binary media $\kappa(x)$ with background one and with 
multiple (connected) inclusions. We consider 
high-conductivity inclusions (with conductivity $\eta$) and 
low-conductivity inclusions  (with conductivity $1/\eta$). 
We derive expansions of the form 
\begin{equation}\label{eq:expansion-general}
u_{\eta}=\eta u_{-1}+u_{0}+ \frac{1}{\eta} u_1+
\frac{1}{\eta^2}u_2+\dots.
\end{equation}
In the case with only high-conductivity inclusions we 
have $u_{-1}=0$ and~\eqref{eq:expansion-general} reduces to 
\begin{equation}\label{eq:expansion-general-reduced}
u_{\eta}=u_{0}+ \frac{1}{\eta} u_1+
\frac{1}{\eta^2}u_2+\dots.
\end{equation}
In the presence of low-conductivity 
inclusions we may have $u_{-1}\not=0$ depending on 
the support of the forcing term.
We mention here that, in order to 
derive the expansions, we use the weak formulation 
associated to~\eqref{eq:problem-strong}. 
In this case, using the integral formulation has 
the advantage that the boundary, interface and 
transmission conditions are self-revealing.

The asymptotic problems for the case 
of only high-conductivity or only low-conductivity are studied in 
detail. 
For the study of the high-conductivity inclusions asymptotic 
problem we use harmonic characteristic functions.
These functions are defined 
as being constant inside the inclusions and harmonic in 
the background domain. The asymptotic 
solution can be obtained by solving a Dirichlet 
problem in the background domain and a 
finite dimensional problem in the space
spanned by the harmonic characteristic functions. 
The solution of the finite dimensional problem 
gives a closed formula for the constant values of the limit 
solution inside the high-conductivity inclusions.
The resulting system can be large, in general, and one can 
consider some localization techniques 
(c.f.,~\cite{BerlyandNovikov,yulya-berlyand}).

The asymptotic problem  and  approximations of  
the asymptotic problem, in the case of high-conductivity 
inclusions, have been studied in the literature.
We mention~\cite{BerlyandNovikov,yulya-berlyan2,bp98} where a
discrete network approximation is considered for the 
problem of computing the effective conductivity of high-contrast, 
randomly, and densely packed composites with 
high-conductivity inclusions.
The network approximation depends on the geometry of the inclusions
and the behavior of the solution between nearby inclusions.
The authors can localize the interaction of high-conductivity inclusions
using graph-theoretical concepts. Furthermore, they propose a finite element
approach for solving the resulting system and identifying the first order approximation of the solution.  
In~\cite{yulya-berlyand} the authors study the 
homogenization of the asymptotic problem 
in terms of geometric parameters such as the shapes of inclusions and the distance between the inclusions.  
They develop an 
asymptotic analysis for periodic structures 
with absolutely conductive square inclusions. The small scales 
considered here are the period of the structure 
and the distance between inclusions. 
We refer to the works~\cite{BerlyandNovikov,yulya-berlyand, yulya-berlyan2} 
and references therein.

We also write 
a low-conductivity asymptotic problem valid
only in the case where the forcing term vanishes 
inside the low-conductivity inclusions. 
This problem is important in flow applications where
low conductivity regions represent shale regions and can 
substantially alter the overall flow behavior.
To our best knowledge, this problem 
 is not extensively studied in the literature.
The asymptotic 
solution can be obtained by solving: 1) a 
mixed boundary condition problem 
in the background domain, and then, 2) a Dirichlet 
problem inside the inclusion with zero forcing term 
and the Dirichlet data from 1).

We show how to 
obtain  all the coefficients in the expansions.  
The procedure 
to compute the coefficients, coincide with a 
Dirichlet-to-Neumann procedure as in Domain Decompositions Methods; 
see~\cite{Tarekbook,tw}. 
For the case of high-conductivity inclusions, the Neumann 
problems are solved in the interior inclusions and therefore, 
a compatibility condition needs to be verified. 
Obtaining the right balance of fluxes for the 
compatibility condition involves the solution of 
a finite dimensional problem in the space 
spanned by the harmonic characteristic functions 
mentioned above. For the case of low-conductivity inclusions the Neumann 
problems are solved in the background domain and 
no compatibility condition in required. The expansions derived in this paper are proven to converge 
in $H^1(D)$ for high-contrast bigger that a certain constant.
This constant  depends on the domains representing the inclusions 
and the background domain.  Asymptotic expansions in the 
presence of  boundary intersecting inclusions  can  be derived and analyzed using similar arguments. The presence 
of low- and high-conductivity inclusions can be also analyzed. 
More general geometrical configurations and partial differential 
equations can be studied as well.

Having  a practical procedure to compute 
the next leading order terms in~\eqref{eq:expansion-general} 
is useful for applications. For instance, the quantity 
$u_{1}$ in~\eqref{eq:expansion-general-reduced} may have considerable contribution to some quantity of interest in some regions, e.g., in the velocity 
$\kappa|\nabla u_{\eta}|$ expansion, the solution is multiplied by $\eta$.
A high-order expansion is also useful when constructing multiscale
and multilevel methods. 
The expansion~\eqref{eq:expansion-general} can be used 
to construct multiscale finite element basis functions; see 
\cite{eh09, cgh09}. 
Such an expansion will allow the construction of basis 
functions independent of the contrast and depending only 
on the limiting problem. In the case of expansion~\eqref{eq:expansion-general},
the asymptotic 
problem depends only on the geometry configuration describing 
the inclusions (see Section~\ref{sec:asymp-problem-multiple}). 
These basis functions will capture the effect on the solution 
of the geometric arrangement describing the conductivity. 
The next order terms in~\eqref{eq:expansion-general} 
can be used to construct correction terms to account 
for the effect of the contrast 
in the coefficient. Expansion~\eqref{eq:expansion-general} 
can be used to improve existing advanced multiscale finite element
techniques for a better sub-grid capturing; see~\cite{ceg10,eh09}. 
Fast numerical upscaling techniques can also be constructed 
with the first order terms of~\eqref{eq:expansion-general} 
or~\eqref{eq:expansion-general-reduced}. 
See~\cite{EILRW08} where the authors develop 
fast numerical upscaling methods based on some asymptotic analysis.
We also mention~\cite{klapper,DangQuang} where 
numerical approximations are designed using asymptotic 
analysis.

The rest of the paper is organized as follows. 
In Section~\ref{sec:problem:setting} we 
recall the weak formulation of~\eqref{eq:problem-strong}. 
In Section~\ref{sec:high} we 
derive the expansion  for the case of high-conductivity 
inclusions. We study the asymptotic 
problem and the convergence of the expansions.
Section~\ref{sec:low} is dedicated to the 
case of low-conductivity inclusions. 
In Section~\ref{sec:low-and-high} we consider 
the case with low- and high-conductivity inclusions 
and in Section~\ref{sec:conclusions} we make some 
conclusion and final comments.

\section{Problem Setting}\label{sec:problem:setting}

Let $D\subset\mathbb{R}^d$    
polygonal domain or a domain with smooth boundary.
We consider the following weak formulation of~\eqref{eq:problem-strong}.
Find $u\in H^1(D)$ such that 
\begin{equation}\label{eq:problem}
\left\{\begin{array}{ll}
a(u,v)=f(v) & \forall v\in H_0^1(D),\\
\hspace{.3in} u=g &\mbox{ on } \partial D.
\end{array}\right.
\end{equation}
Here the bilinear form $a$ and the linear functional $f$ are 
defined by
\begin{align}\label{eq:def:a}
a(u,v)&=\int_D 
\kappa(x)\nabla u(x)\cdot  \nabla v(x)  
 &&\forall  u,v\in H_0^1(D)\\
f(v)&=\int_Df(x)v(x) &&\forall v\in H_0^1(D). 
\end{align}
We assume that $D$ is the disjoint union 
of a background domain and inclusions,
$D=D_0\cup (\cup_{m=1}^M D_m).$ We assume 
that $D_0,D_1,\dots,D_M,$ are  
polygonal domains (or domains 
with smooth boundaries). We also assume that 
each $D_m$ is a connected domain, $m=0,1,\dots,M$. 
Let $D_0$ represent the background domain and 
the subdomains $\{D_m\}_{m=1}^M$ represent 
the inclusions. For simplicity of the presentation 
we consider only interior inclusions. See 
Figure~\ref{fig:figure1} for two dimensional 
illustrations. 

\begin{figure}[htb]
\centering
{\psfig{figure=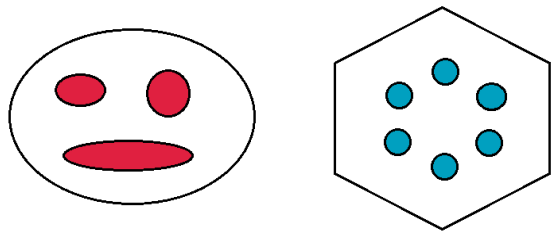,height=3.3cm,width=8cm,angle=0}}
\caption{Examples of geometry configurations with 
interior inclusions.}
\label{fig:figure1} 
\end{figure}

Given $w\in H^1(D)$ we will use the notation $w^{(m)}$, 
for the restriction of $w$ to the domain $D_m$, that is, 
\[
w^{(m)}=w|_{D_m}, \quad m=0,1,\dots,M.
\]

\section{Expansions for high-conductivity inclusions }
\label{sec:high}

In this section we derive and analyze expansions for 
the case of high conductivity inclusions. 
For the sake of readability and presentation, we consider 
first the case of only one high-conductivity 
inclusion in Section~\ref{sec:one}
and study the convergence of this
expansion in Section~\ref{sec:convergence}. 
We present the multiple high-conductivity inclusions 
case in Section~\ref{sec:high-multiple}, where we describe
the expansion and analyze its convergence, following the structure
presented in Sections~\ref{sec:one} and~\ref{sec:convergence}.

\subsection{Derivation for one high-conductivity inclusion}
\label{sec:one}

Let $\kappa$ be defined by 
\begin{equation}\label{eq:coeff1inc}
\kappa(x)=\left\{\begin{array}{cc} 
\eta,& x\in   D_1,  \\
1,& x\in  D_0=D\setminus \overline{D}_1,
\end{array}\right.
\end{equation}
and denote by $u_{\eta}$ the solution
of the weak formulation ~\eqref{eq:problem}.
We assume that 
$D_1$ is compactly included in $D$ ($\overline{D}_1\subset D$).
Since $u_\eta$ is solution of~\eqref{eq:problem} with 
the coefficient~\eqref{eq:coeff1inc}, we have
\begin{equation}\label{eq:problem-with-1-inclusion}
\int_{D_0}\nabla u_{\eta}\cdot  \nabla v +\eta\int_{D_1}\nabla u_\eta
\cdot  \nabla v=
\int_{D}fv \quad \forall v\in H_0^1(D).
\end{equation}

We seek to determine $\{u_i\}_{i=0}^\infty\subset 
H^1(D)$ such that, 
\begin{equation}\label{eq:expansion}
u_\eta=u_0+\frac{1}{\eta}u_1+\frac{1}{\eta^2}u_2+\dots=
\sum_{i=0}^\infty \eta^{-i} u_{i}, 
\end{equation}
and such that they satisfy the following Dirichlet boundary 
conditions, 
\begin{equation}\label{eq:expansionBC}
u_0=g \mbox{ on } \partial D \quad \mbox{ and }
\quad u_{i}=0 \mbox{ on } \partial D \mbox{ for } i\geq 1.
\end{equation}

We substitute~\eqref{eq:expansion} into~\eqref{eq:problem-with-1-inclusion} 
to obtain that for all $v\in H^1_0(D)$ we have, 
\[
\sum_{i=0}^\infty \eta^{-i} \int_{D_0}\nabla u_{i}\cdot  \nabla v+
\sum_{i=0}^\infty \eta^{-i+1}\int_{D_1}\nabla u_{i}\cdot  \nabla v=\int_{D}fv
\]
or 
\begin{equation}
\eta\int_{D_1}\nabla u_{0}\cdot \nabla v+
\sum_{i=0}^\infty 
\eta^{-i}\left(
 \int_{D_0}\nabla u_{i}\cdot \nabla v+\int_{D_1}\nabla u_{i+1}\cdot \nabla v\right)
=\int_{D}fv. \label{eq:WeakSequence}
\end{equation}
Now we collect terms with equal powers of $\eta$ and analyze the resulting 
subdommain equations. 

\subsubsection{Term corresponding to  $\eta=\eta^1$}

In~\eqref{eq:WeakSequence} there is one term corresponding to $\eta$ to the power 1, thus we obtain the following equation 
\begin{equation}
\int_{D_1}\nabla u_{0}\cdot \nabla v=0 \mbox{ for all } v\in H^1_0(D).
\end{equation}
In the general case, 
the meaning of this equation depends on  the relative 
position of the inclusion $D_1$ with respect to the boundary. 
It may need to take the boundary data into account. 
Since we are assuming  that $\overline{D}_1\subset D$, we conclude that 
$\nabla u_0^{(1)}=0$ in $D_1$ and then $u_0^{(1)}$ (the restriction
of $u_0$ on $D_1$) is a constant.

\subsubsection{Terms corresponding to $\eta^0=1$}
Equation~\eqref{eq:WeakSequence} contains three terms corresponding to $\eta$ to the power 0, which are:
\begin{equation}\label{eq:eta0}
 \int_{D_0}\nabla u_{0}\cdot \nabla v+\int_{D_1}\nabla u_{1}\cdot \nabla v=\int_{D}fv 
\mbox{ for all } v\in H^1_0(D).
\end{equation}
Let 
\[
V_{const}=\{ v\in H^1_0(D), \mbox{ such that } 
v^{(1)}=v|_{D_1} \mbox{ is  constant}\}.
\]
If we consider $z\in V_{const}$ in equation~\eqref{eq:eta0} we conclude 
that $u_0$ satisfies the following problem, 
\begin{align}\label{eq:limitproblem}
\displaystyle  \left(\int_{D}\nabla u_{0}\cdot \nabla z= \right) \quad 
\int_{D_0}\nabla u_{0}\cdot \nabla z&=\int_{D}fz 
&&\forall z\in V_{const}\\
u_0&=g &&\mbox{ on } \partial D
\end{align}

The problem~\eqref{eq:limitproblem} is elliptic and it has a unique solution. 
To analyze this problem further, it is natural to 
define a \emph{harmonic characteristic function} 
$\chi_{D_1}\in H^1_0(D)$ such that
\[
\chi_{D_1}^{(1)}=1 \quad \mbox{ in } D_1,\]
and the harmonic extension of its boundary data in $D_0$ is given by
\begin{align}\label{eq:def:chiD1}
\displaystyle \int_{D_0} \nabla \chi_{D_1}^{(0)} \cdot \nabla z &=0 &&\forall z\in H^1_0(D_0) \nonumber\\ 
\chi_{D_1}^{(0)}&=1 &&\mbox{ on } \partial D_1, \\
\chi_{D_1}^{(0)}&=0 &&\mbox{ on } \partial D. \nonumber
\end{align}

To obtain an explicit formula for $u_0$ we will use the following facts:
$(i)$~problem~\eqref{eq:limitproblem} is elliptic and has a unique solution, 
and $(ii)$~a property of the harmonic characteristic functions described
in the Remark below.
 
\begin{remark}\label{rem:property-of-chi}
Let $w$ be a harmonic extension to $D_0$ of its Neumann data on 
$\partial D_0$. That is, $w$ satisfy the following problem,
\[
\int_{D_0}\nabla w\cdot \nabla z =\int_{\partial D_0}\nabla w\cdot n_0 \,z\quad 
\mbox{ for all } z\in H^1(D_0).
\]
Since $\chi_{D_1}=0$ on $\partial D$ and 
$\chi_{D_1}=1$ on $\partial D_1$, we readily have  that 
\[
\int_{D_0}\nabla \chi_{D_1}\cdot\nabla w=
\int_{\partial D_0} \nabla w\cdot n 
\chi_{D_1}=
0\left(\int_{\partial D} \nabla w\cdot n\right) +
1\left(\int_{\partial D_1} \nabla w\cdot n_0\right) \]
and we conclude that for every harmonic function on $D_0$,  
\begin{equation}\label{eq:def:property-of-chiD1}
\int_{D_0}\nabla \chi_{D_1}\cdot\nabla w=
\int_{\partial D_1} \nabla w\cdot n_0.
\end{equation}
In particular, taking $w=\chi_{D_1}$ we have:
\begin{equation}\label{eq:chi_flux}
\int_{D}|\nabla \chi_{D_1}|^2=
\int_{D_0}|\nabla \chi_{D_1}|^2=\int_{\partial D_1}\nabla 
\chi_{D_1}\cdot n_0.
\end{equation}
Note also that if $\xi\in H^1(D)$ is such that 
$\xi^{(1)}=\xi|_{D_1}=c$ is 
constant in $D_1$ and $\xi^{(0)}=\xi|_{D_0}$ is 
harmonic in $D_0$, then, $\xi=c\chi_{D_1}$.
\end{remark}

We can decompose $u_0$ into the harmonic extension of 
its  constant value in $D_1$, $c(u_0)$,  plus 
the remainder  $u_{0,0}\in H^1(D_0)$. Thus, we write,  
\[
u_{0}=u_{0,0}+c(u_0)\chi_{D_1} 
\]
where $u_{0,0}\in H^1(D)$ is defined by
$u_{0,0}^{(1)}=0$ in $D^{(1)}$ and $u_{0,0}^{(0)}$ solves 
the following Dirichlet  problem,
\begin{align}\label{eq:def:u00}
\displaystyle \int_{D_0} \nabla u_{0,0}^{(0)}\cdot \nabla z &=\int_{D_0}fz &&\forall z\in H^1_0(D_0).\\
u_{0,0}^{(0)}&=0 &&\mbox{ on } \partial D_1,\nonumber\\
u_{0,0}^{(0)}&=g &&\mbox{ on } \partial D.\nonumber
\end{align}
From equation~\eqref{eq:limitproblem} and the observations
in Remark~\ref{rem:property-of-chi} we get that 
\[
\int_{D_0}\nabla (u_{0,0}+c(u_0)\chi_{D_1}) \cdot\nabla 
\chi_{D_1} =\int_{D} f\chi_{D_1}
\]
from which we can obtain
\begin{equation}\label{ref:constantcu0}
c(u_0)=
\frac{\int_{D}f\chi_{D_1} -
\int_{D_0}\nabla u_{0,0}\cdot\nabla \chi_{D_1}
}{\int_{D_0}|\nabla\chi_{D_1}|^2}.
\end{equation}

There is a useful  alternative expression for $c(u_0)$ in~\eqref{ref:constantcu0}
that we also use. 
By using the  
Neumann problem related to $u_{0,0}$  we have that,
\begin{eqnarray*}
\int_{D_0} \nabla u_{0,0}\cdot\nabla \chi_{D_1}&=&
\int_{D_0}f\chi_{D_1}+\int_{\partial D_0}\nabla u_{0,0}\cdot n_0
\chi_{D_1}\\&=&\int_{D_0}f\chi_{D_1}+\int_{\partial D_1}\nabla u_{0,0}\cdot 
n_01
\end{eqnarray*}
and then noting that $\int_{D}f\chi_{D_1}=
\int_{D_0}f\chi_{D_1}+\int_{D_1}f$ we get,
\begin{align}\label{ref:constantcu0-v2}
c(u_0)=
\dfrac{\int_{D_1}f-\int_{\partial D_1}\nabla u_{0,0}\cdot n_0
}{\int_{\partial D_1}\nabla\chi_{D_1}\cdot n_0},
\end{align}
which reveals that $c(u_0)$ balances the fluxes across $\partial D_1$. To summarize the results obtained to this point, we can express $u_0$ as follows:
\begin{align}
u_{0}&=u_{0,0}
   +\dfrac{\int_{D}f\chi_{D_1} -\int_{D_0}\nabla u_{0,0}\cdot \nabla \chi_{D_1}}
{\int_{D_0}|\nabla\chi_{D_1}|^2} \chi_{D_1}
\label{eq:u0-equal-u00-plus-chiD1}\\
&=u_{0,0}+{
\dfrac{\int_{D_1}f-\int_{\partial D_1}\nabla u_{0,0}\cdot n_0}
{\int_{\partial D_1}\nabla\chi_{D_1}\cdot n_0}}\chi_{D_1}
\label{eq:u0-equal-u00-plus-chiD1v2}
\end{align}

Given the explicit form of $u_0$, we use it in~\eqref{eq:eta0} 
to find $u_1^{(1)}=u_1|_{D_1}$, from if we conclude that 
 $u_0^{(0)}$ and $u_1^{(1)}$ satisfy the 
local Dirichlet problems
\begin{align*}
\int_{D_0}\nabla u_0\cdot \nabla z&=\int_{D_0}fz &&\forall z\in H^1_0(D_0),\\
\int_{D_1}\nabla u_1\cdot \nabla z&=\int_{D_1}fz &&\forall z\in H^1_0(D_1),
\end{align*}
with given boundary data on $\partial D_0$ and $\partial D_1$.
Equation~\eqref{eq:eta0}  also represents the transmission 
conditions across $\partial D_1$ for the functions $u_{0}^{(0)}$ 
and $u_1^{(1)}$. This is easier to see when  
the forcing $f$ is square integrable.
From now on, in order to 
simplify the presentation, we assume that 
$f\in L^2(D)$. If $f\in L^2(D)$,  we have that 
$u_0^{(0)}$ and $u_1^{(1)}$ are the only solutions of the  
problems:
\[
\int_{D_0}\nabla u_0^{(0)}\cdot \nabla z=\int_{D_0}fz +
\int_{\partial{D_0}\setminus \partial D}\nabla u_0^{(0)}\cdot
n_0 z  \quad
\forall z\in H^1({D_0}) 
\mbox{ with } z=0 \mbox{ on } \partial D\] 
with $u_0^{(0)}={g}$ on $\partial D$, and 
\[
\int_{D_1}\nabla u_1^{(1)}\cdot \nabla z=\int_{D_1}fz +
\int_{\partial D_1}\nabla u_1^{(1)}\cdot n_1  z \quad \mbox{ for all } z\in H^1(D_1).
\]
Replacing these  last two equations back into~\eqref{eq:eta0} 
we conclude that 
\[
\int_{\partial D_1} (\nabla u_0^{(0)}\cdot n_0+\nabla u_1^{(1)}\cdot n_1)z=0
\quad \mbox{ for all } z\in H^1(D),\]
which implies
\[
\nabla u_1^{(1)}\cdot n_1=-\nabla u_0^{(0)}\cdot n_0 \quad \mbox{ on }
\partial D_1.
\]
Using this interface condition we can obtain $u_1^{(1)}$ in $D_1$ 
by writing 
\[
u_1^{(1)}=\widetilde{u}_1^{(1)}+c_1 \quad \mbox{ where } \int_{D_1}\widetilde{u}_1^{(1)}=0
\]
and $\widetilde{u}_1^{(1)}$ solves the Neumann problem
\begin{equation}\label{eq:Neumanu1}
\int_{D_1}\nabla \widetilde{u}_1^{(1)}\cdot \nabla z=\int_{D_1}fz -
\int_{\partial D_1}\nabla u_0^{(0)}\cdot n_1  z
\quad \mbox{ for all } z\in H^1(D_1).
\end{equation}
The constant $c_1$ will be chosen later.  
Problem~\eqref{eq:Neumanu1} satisfies 
the compatibility condition, 
\begin{eqnarray*}
\int_{\partial D_1}\nabla \widetilde{u}_1^{(1)}\cdot n_1&=&-
\int_{\partial D_1 }\nabla u_0^{(0)}\cdot n_0\\&=&
-\int_{\partial D_1}\nabla u_0^{(0)}\cdot n_0-c(u_0)\int_{\partial D}
\nabla \chi_{D_1}\cdot n_0
=\int_{D_1}f.
\end{eqnarray*}
Here we use the value of $c(u_0)$ is given in~\eqref{ref:constantcu0-v2}.

Next, we discuss how to compute $u_1^{(0)}$ and 
$\widetilde{u}_1^{(0)}$ to  completely define 
the functions $u_1\in H^1(D)$ and $\widetilde{u}_1\in H^1(D)$.
These are presented for general 
$i\geq 1$ since the construction is independent of $i$ in this range.

\subsubsection{Term corresponding to $\eta^{-i}$ with $i\geq 1$:}
For powers of ${1}/{\eta}$ larger or equal to one there are only two 
terms in the summation that lead to the following system:
\begin{equation}\label{eq:term-i}
 \int_{D_0}\nabla u_{i}\cdot \nabla v+\int_{D_1}\nabla u_{i+1}\cdot \nabla v=0
\quad \mbox{ for all } v\in H^1_0(D).
\end{equation}
This equation represents both the subdomain problems 
and the transmission conditions across $\partial D_1$ 
for $u_i^{(0)}$ 
and $u_{i+1}^{(1)}$. 
Following a similar argument to the one given above, 
we conclude that $u_i^{(0)}$ is harmonic in $D_0$ for all $i\geq 1$ and that 
$u_i^{(1)}$ is harmonic in $D_1$ for $i\geq 2$. As before, we
have 
\begin{equation}\label{eq:neumann-cond-generali}
\nabla u_{i+1}^{(1)}\cdot n_1=-\nabla u_i^{(0)}\cdot n_0.
\end{equation}

We note that $u_i^{(1)}$ in $D_1$, (e.g., $u_1^{(1)}$ above) 
is given by 
the solution of a Neumann problem in $D_1$. 
To uniquely determine $u_i^{(1)}$, we impose the 
condition $\int_{D_1}u_i^{(1)}=c_i$ and write 
\begin{equation}
u_i^{(1)}=\widetilde{u}_i^{(1)}+c_i \quad \mbox{ where } \int_{D_1}
\widetilde{u}_i=0.
\end{equation}
where the appropriate  $c_i$ will be determined later.

Given $u_i^{(1)}$ in $D_1$ we find $u_i^{(0)}$ in $D_0$ by solving
a Dirichlet problem with known Dirichlet data, that is, 
\begin{equation}\label{eq:DirforuiD0}
\begin{array}{l}
\displaystyle \int_{D_0}\nabla u_{i}^{(0)}\cdot \nabla z=0 \mbox{ for all } 
z\in H^1_0(D_0) \\\\
u_i^{(0)}=u_i^{(1)} ~~(=\widetilde{u}_i^{(1)}+c_i) \mbox{ on } 
\partial D_1
\quad \mbox{ and } \quad u_i=0 \mbox{ on } \partial D.
\end{array} 
\end{equation}

Since $c_i$, $i=1,\dots,$  are constants, their harmonic extensions 
are  given by $c_i\chi_{D_1}$, $i=1,\dots$; see
Remark~\ref{rem:property-of-chi}. Then,  we conclude that 
\begin{equation}
u_i=\widetilde{u}_i+c_i\chi_{D_1}
\end{equation}
where $\widetilde{u}_i^{(0)}$ is defined by~\eqref{eq:DirforuiD0}
 replacing $c_i$ by $0$. This completes the 
construction of $u_{i}$. 

Now we proceed to show how to 
to find 
$u_{i+1}^{(1)}$ in $D_1$. For this,  we use~\eqref{eq:term-i} 
and~\eqref{eq:neumann-cond-generali} 
which lead to the following Neumann problem
\begin{equation}\label{eq:Neuforuiplu1D1}
 \int_{D_1}\nabla \widetilde{u}_{i+1}^{(1)}\cdot \nabla z=
-\int_{\partial D_1 }\nabla 
u_{i}^{(0)}\cdot n_0z \quad \mbox{ for all } 
z\in H^1(D_1).
\end{equation}

The compatibility condition for this Neumann problem 
is satisfied if we choose 
\begin{equation}\label{eq:def:ci}
c_i=-\frac{\int_{\partial D_1} \nabla \widetilde{u}_{i}^{(0)}\cdot n_0  }{
\int_{\partial D_1} \nabla \chi_{D_1}^{(0)}\cdot n_0}=
-\frac{\int_{D_0}\nabla \widetilde{u}_i \cdot\nabla \chi_{D_1} }{
\int_{D_0} |\nabla \chi_{D_1}|^2}.
\end{equation}
For the second equality see Remark~\ref{rem:property-of-chi} below 
and Equations~\eqref{ref:constantcu0} and~\eqref{ref:constantcu0-v2}.
The compatibility conditions trivially satisfy
\begin{align*}
\int_{\partial D_1}\nabla \widetilde{u}_{i+1}^{(1)}\cdot n_1&=
-\int_{\partial D_1}\nabla u_{i}^{(0)}\cdot n_0\\&=
-\int_{\partial D_1}\nabla(\widetilde{u}_{i}^{(0)}
+c_i
\chi_{D_1}^{(0)})\cdot n_0\\
&=
-\int_{\partial D_1} \nabla \widetilde{u}_{i}^{(0)}\cdot n_0-c_i
\int_{\partial D_1}\nabla \chi_{D_1}^{(0)}\cdot n_0\\&=0.
\end{align*}
where we have used the definitions of $c_i$ given in~\eqref{eq:def:ci}.

We can choose $u_{i+1}^{(1)}$ in $D_1$ such that 
\[
u_i^{(1)}=\widetilde{u}_{i+1}^{(1)}+c_{i+1} \quad 
\mbox{ where } \int_{D_1}\widetilde{u}_{i+1}=0,
\]
and, as before, 
\[
c_{i+1}=-\frac{\int_{\partial D_1} \nabla \widetilde{u}_{i+1}^{(0)}
\cdot n_0  }{
\int_{\partial D_1} \nabla \chi_{D_1}^{(0)}\cdot n_0}=
-\frac{\int_{D_0}\nabla \widetilde{u}_{i+1} \cdot\nabla \chi_{D_1} }{
\int_{D_0} |\nabla \chi_{D_1}|^2},
\]
so we have the compatibility condition 
of the Neumann problem to compute $u_{i+2}^{(1)}$.
See the Equation~\eqref{eq:Neuforuiplu1D1}.

\subsubsection{Summary}\label{sec:summary}
We summarize the Dirichlet-to-Neumann 
procedure to compute the terms of the  asymptotic expansion 
for $u_\eta$ in~\eqref{eq:expansion}-\eqref{eq:expansionBC}.
\begin{enumerate}
\item Compute $u_{0}$ using formulae~\eqref{eq:u0-equal-u00-plus-chiD1} 
or~\eqref{eq:u0-equal-u00-plus-chiD1v2}.
\item Compute $u_1^{(1)}$ in $D_1$ by solving 
the Neumann problem~\eqref{eq:Neumanu1}. 
Compute $u_1^{(0)}$ in $D_0$ solving 
the Dirichlet problem~\eqref{eq:DirforuiD0} with $i=1$.
\item For $i=2,3,\dots$ compute $u_i^{(1)}$ in $D_1$ by solving the 
Neumann 
problem~\eqref{eq:Neuforuiplu1D1}. Then,  compute 
$u_i^{(0)}$ in $D_0$ solving 
the Dirichlet problem~\eqref{eq:DirforuiD0}.

\end{enumerate}

Other cases can be considered. For instance, an expansion 
for the case where we interchange 
$D_0$ and $D_1$ can also be analyzed. In this case
the asymptotic solution is not constant in 
the high-conducting part. 
Multiple inclusions will be consider in Section~\ref{sec:high-multiple}. 

\subsection{Convergence in $H^1(D)$}\label{sec:convergence}
In  this section we study the convergence 
of the expansion~\eqref{eq:expansion}-\eqref{eq:expansionBC}.
For simplicity 
of the presentation we consider the case of one high-conductivity 
inclusion. The converge results will be extended to 
the multiple high-conductivity inclusions in 
Section~\ref{sec:high-multiple}.
We assume that $\partial D$ and $\partial 
D_1$ are sufficiently smooth, see~\cite{Grisvard}. 
\begin{lemma}\label{lemma:firstbounds}
Let $u_0$  in~\eqref{eq:u0-equal-u00-plus-chiD1}, 
with $u_{0,0}$ defined in~\eqref{eq:def:u00}, 
and  $u_1$ be defined by~\eqref{eq:Neumanu1} and~\eqref{eq:DirforuiD0}
with $i=1$. 
We have that, 
\begin{equation}\label{eq:bound:u0-by-f-and-g}
|u_0|_{H^1(D)}
\preceq   \|f\|_{H^{-1}(D)}+\|g\|_{H^{1/2}(\partial D)},
\end{equation}
\begin{eqnarray}\label{eq:boundu1tilde-by-fandg}
\|\widetilde{u}_1\|_{H^1(D_1)} &\preceq& \|f\|_{H^{-1}(D_1)}
+\|g\|_{H^{1/2}(D)}\end{eqnarray}
and 
\begin{equation}\label{eq:boundu1tildeD0-by-u1tildeD1}
\|\widetilde{u}_{1}\|_{H^1(D_0)}\preceq
\|\widetilde{u}_{1}\|_{H^{1/2}(\partial D_1)}\preceq
\|\widetilde{u}_{1}\|_{H^1(D_1)}.
\end{equation}

\end{lemma}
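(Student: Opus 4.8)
The plan is to treat the three estimates separately, since each one reduces to a standard elliptic energy estimate combined with trace theory; the only genuinely delicate point will be the boundary-flux term appearing in the Neumann problem for $\widetilde u_1$.

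For \eqref{eq:bound:u0-by-f-and-g} I would reduce the limit problem \eqref{eq:limitproblem} to a homogeneous one by lifting the boundary data. Choose an extension $Eg\in H^1(D)$ of $g$ with $Eg\equiv 0$ on $D_1$ (possible since $\overline D_1\subset D$, using a cutoff supported near $\partial D$) and $\|Eg\|_{H^1(D)}\preceq\|g\|_{H^{1/2}(\partial D)}$. Then $w:=u_0-Eg$ lies in $V_{const}\subset H^1_0(D)$, and testing \eqref{eq:limitproblem} against $z=w$ gives
\[
\int_{D_0}|\nabla w|^2=\int_D fw-\int_{D_0}\nabla Eg\cdot\nabla w.
\]
Because $w$ is constant on $D_1$ its full seminorm equals $|w|_{H^1(D_0)}$, and because $w$ vanishes on $\partial D$ the Poincar\'e inequality gives $\|w\|_{H^1(D)}\preceq|w|_{H^1(D_0)}$. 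Bounding the right-hand terms by $\|f\|_{H^{-1}(D)}\|w\|_{H^1(D)}$ and $|Eg|_{H^1(D)}\,|w|_{H^1(D_0)}$ and dividing by $|w|_{H^1(D_0)}$ yields the estimate for $w$; the triangle inequality $|u_0|_{H^1(D)}\le|w|_{H^1(D)}+|Eg|_{H^1(D)}$ then closes this bound.

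For \eqref{eq:boundu1tilde-by-fandg} I would test the Neumann problem \eqref{eq:Neumanu1} against $z=\widetilde u_1^{(1)}$, obtaining
\[
|\widetilde u_1|_{H^1(D_1)}^2=\int_{D_1}f\,\widetilde u_1-\int_{\partial D_1}(\nabla u_0^{(0)}\cdot n_1)\,\widetilde u_1 .
\]
Since $\int_{D_1}\widetilde u_1=0$, the Poincar\'e--Wirtinger inequality lets me control $\|\widetilde u_1\|_{H^1(D_1)}$ by its seminorm, and the forcing term is handled by $|\int_{D_1}f\,\widetilde u_1|\le\|f\|_{H^{-1}(D_1)}\|\widetilde u_1\|_{H^1(D_1)}$. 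The hard part is the boundary integral: I would interpret $\nabla u_0^{(0)}\cdot n_1$ as the conormal derivative of $u_0^{(0)}$, which (since $-\Delta u_0^{(0)}=f$ in $D_0$ and $\partial D_1$ is smooth) is well defined in $H^{-1/2}(\partial D_1)$ with $\|\nabla u_0^{(0)}\cdot n_1\|_{H^{-1/2}(\partial D_1)}\preceq |u_0|_{H^1(D_0)}+\|f\|_{L^2(D_0)}$ via Green's formula. Pairing this against the $H^{1/2}(\partial D_1)$ trace of $\widetilde u_1$ (trace theorem) and inserting the already-established bound \eqref{eq:bound:u0-by-f-and-g} for $\|u_0\|_{H^1}$ finishes the estimate, the remaining $f$-contributions being absorbed into the right-hand side. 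This conormal-trace step, precisely where the smoothness hypotheses on $\partial D$ and $\partial D_1$ enter, is the main obstacle.

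Finally, \eqref{eq:boundu1tildeD0-by-u1tildeD1} is the most routine: the right-hand inequality is just the trace theorem for the map $H^1(D_1)\to H^{1/2}(\partial D_1)$, while the left-hand inequality is the $H^1$-stability of the Dirichlet extension defining $\widetilde u_1^{(0)}$ in \eqref{eq:DirforuiD0}, namely that the solution of the Laplace problem in $D_0$ with data $\widetilde u_1$ on $\partial D_1$ and $0$ on $\partial D$ is controlled in $H^1(D_0)$ by the $H^{1/2}(\partial D_1)$-norm of its boundary data. Both are standard continuity properties of the trace and harmonic-extension operators.
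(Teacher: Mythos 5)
Your three estimates are all correct, but your treatment of \eqref{eq:bound:u0-by-f-and-g} follows a genuinely different route from the paper's. The paper works directly with the decomposition $u_{0}=u_{0,0}+c(u_0)\chi_{D_1}$ of \eqref{eq:u0-equal-u00-plus-chiD1}: it first bounds $\|u_{0,0}\|_{H^1(D_0)}$ by the classical estimate for the Dirichlet problem \eqref{eq:def:u00}, and then bounds the contribution $|c(u_0)|\,|\chi_{D_1}|_{H^1(D_0)}$ by substituting the closed formula \eqref{ref:constantcu0} for $c(u_0)$ and cancelling one factor of $|\chi_{D_1}|_{H^1(D_0)}$ against the denominator. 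You instead lift the boundary data by an extension $Eg$ vanishing on $D_1$ and run a direct energy (Lax--Milgram type) estimate on the variational problem \eqref{eq:limitproblem}, testing with $w=u_0-Eg\in V_{const}$. Both arguments are sound and give the same bound; yours is more self-contained and does not require the explicit representation of $c(u_0)$, while the paper's has the advantage of reusing the decomposition that drives the rest of the construction (and generalizes verbatim to the multiple-inclusion case via the system $A_{geom}X=B$). For \eqref{eq:boundu1tilde-by-fandg} and \eqref{eq:boundu1tildeD0-by-u1tildeD1} the paper simply invokes ``the classical estimate'' for the Neumann problem \eqref{eq:Neumanu1} and the trace theorem plus stability of the harmonic extension for \eqref{eq:DirforuiD0}; your argument is the same one, only you make explicit the step the paper leaves implicit, namely that $\nabla u_0^{(0)}\cdot n_1$ must be read as a conormal derivative in $H^{-1/2}(\partial D_1)$ defined via Green's formula. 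One small honest discrepancy: carried out carefully, your bound for the flux term produces an additional contribution of the form $\|f\|_{L^2(D_0)}$ (or an $f$-norm over $D_0$) on the right-hand side of \eqref{eq:boundu1tilde-by-fandg}, which the paper's stated right-hand side $\|f\|_{H^{-1}(D_1)}+\|g\|_{H^{1/2}(D)}$ omits; since the Neumann data for $\widetilde u_1$ genuinely depends on $f$ through $u_0^{(0)}$, this appears to be an imprecision in the paper's statement rather than a defect in your argument.
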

\begin{proof}
From the definition of $u_{0,0}$ in~\eqref{eq:def:u00} we have that 
\[
\|u_{0,0}\|_{H^1(D_0)}\preceq \|f\|_{H^{-1}(D_0)}+
\|g\|_{H^{1/2}(\partial D)}.
\]
Using~\eqref{eq:u0-equal-u00-plus-chiD1} we have that
\begin{eqnarray}
|u_0|_{H^1(D)}&=&
|u_0|_{H^1(D_0)}\leq |u_{0,0}|_{H^1(D_0)}+|c(u_0)||\chi_{D_1}|_{H^1(D_0)}
\end{eqnarray}
and we observe that 
\begin{eqnarray*}
|c(u_0)||\chi_{D_1}|_{H^1(D_0)}
&\preceq& 
|\frac{\int_{D}f\chi_{D_1} -
\int_{D_0}\nabla u_{0,0}\cdot\nabla \chi_{D_1}}{|\chi_{D_1}|_{H^1(D_0)}}|\\
&\preceq& \|f\|_{H^{-1}(D)}+\|g\|_{H^{1/2}(\partial D)}.
\end{eqnarray*}
This proves~\eqref{eq:bound:u0-by-f-and-g}.
Equation~\eqref{eq:boundu1tilde-by-fandg} follows 
from the classical estimate for problem~\eqref{eq:Neumanu1}. 
Equation~\eqref{eq:boundu1tildeD0-by-u1tildeD1} follows 
from problem~\eqref{eq:DirforuiD0} with $i=1$ and  a 
trace theorem;  see~\cite{Grisvard}.\end{proof}

The following lemma can be obtained using orthogonality 
relations of Galerkin projections.
\begin{lemma}\label{lem:bound-for-c(w)}
If $\widetilde{w}\in H^1(D)$, $\widetilde{w}$ is harmonic in $D_0$  and we define 
\[
w=\widetilde{w}+c(w)\chi_{D_1}\]
where 
\[
c(w)=-\frac{\int_{\partial D_1} \nabla \widetilde{w}\cdot n_0  }{
\int_{\partial D_1} \nabla \chi_{D_1}\cdot n_0}=
-\frac{\int_{D_0}\nabla \widetilde{w} \cdot\nabla \chi_{D_1} }{
\int_{D_0} |\nabla \chi_{D_1}|^2},
\]
then, $w$ and $\chi_{D_1}$ are orthogonal in 
the operator norm induced by the Dirichlet operator, that is,  
$\int_{D}\nabla w\nabla \chi_{D_1}=0$.  We also have 
$|\widetilde{w}|^2_{H^1(D)}=|w|^2_{H^1(D)}+c(w)^2|\chi_{D_1}|^2_{H^1(D)}$, 
\[
|w|_{H^1(D)}\leq |\widetilde{w}|_{H^1(D)} \mbox{ and } 
\|w\|_{H^1(D)}\preceq  \|\widetilde{w}\|_{H^1(D)}.
\]
Here, the hidden constant is the 
  Poincar\'e-Friedrichs  inequality constant on $D$.
\end{lemma}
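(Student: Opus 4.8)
The plan is to recognize that the construction makes $w$ the Dirichlet-orthogonal complement of $\widetilde w$ with respect to the one-dimensional subspace $\mathrm{span}(\chi_{D_1})$, where the relevant inner product is the Dirichlet form $(u,v)=\int_D\nabla u\cdot\nabla v$. Once this is established, every assertion reduces to the Pythagorean theorem for that inner product, with the sole genuinely analytic ingredient being the Poincar\'e--Friedrichs inequality in the last estimate. As a preliminary I would check that the two displayed expressions for $c(w)$ agree: since $\widetilde w$ is harmonic in $D_0$, Remark~\ref{rem:property-of-chi} applied to the harmonic function $\widetilde w$ gives $\int_{\partial D_1}\nabla\widetilde w\cdot n_0=\int_{D_0}\nabla\widetilde w\cdot\nabla\chi_{D_1}$ via~\eqref{eq:def:property-of-chiD1}, while~\eqref{eq:chi_flux} gives $\int_{\partial D_1}\nabla\chi_{D_1}\cdot n_0=\int_{D_0}|\nabla\chi_{D_1}|^2$, so the boundary-flux form and the volume form of the quotient coincide.

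For the orthogonality I would use that $\chi_{D_1}$ is constant in $D_1$, hence $\nabla\chi_{D_1}=0$ there, so every Dirichlet pairing against $\chi_{D_1}$ reduces to an integral over $D_0$. Expanding $w=\widetilde w+c(w)\chi_{D_1}$,
\[
\int_D\nabla w\cdot\nabla\chi_{D_1}=\int_{D_0}\nabla\widetilde w\cdot\nabla\chi_{D_1}+c(w)\int_{D_0}|\nabla\chi_{D_1}|^2 ,
\]
and inserting the volume form of $c(w)$ makes the right-hand side vanish, giving $\int_D\nabla w\cdot\nabla\chi_{D_1}=0$. This is exactly the statement that $-c(w)\chi_{D_1}$ is the $(\cdot,\cdot)$-orthogonal projection of $\widetilde w$ onto $\mathrm{span}(\chi_{D_1})$, matching the ``orthogonality relations of Galerkin projections'' anticipated before the lemma.

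The Pythagorean identity and the seminorm bound then follow at once. Writing $\widetilde w=w-c(w)\chi_{D_1}$ and squaring in the Dirichlet norm,
\[
|\widetilde w|_{H^1(D)}^2=|w|_{H^1(D)}^2-2c(w)\int_D\nabla w\cdot\nabla\chi_{D_1}+c(w)^2|\chi_{D_1}|_{H^1(D)}^2 ,
\]
where the cross term drops by the orthogonality just proved, leaving $|\widetilde w|_{H^1(D)}^2=|w|_{H^1(D)}^2+c(w)^2|\chi_{D_1}|_{H^1(D)}^2$. Because the last summand is nonnegative, $|w|_{H^1(D)}\le|\widetilde w|_{H^1(D)}$.

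For the full-norm bound, which is the only step beyond pure orthogonality, I would note that $\chi_{D_1}\in H^1_0(D)$ and (in the setting where the lemma is applied, e.g.\ to the $\widetilde u_i$) $\widetilde w$ has zero trace on $\partial D$, so $w\in H^1_0(D)$. The Poincar\'e--Friedrichs inequality then gives $\|w\|_{L^2(D)}\le C_{PF}\,|w|_{H^1(D)}$, whence $\|w\|_{H^1(D)}^2=\|w\|_{L^2(D)}^2+|w|_{H^1(D)}^2\le(1+C_{PF}^2)|w|_{H^1(D)}^2$; combining with $|w|_{H^1(D)}\le|\widetilde w|_{H^1(D)}\le\|\widetilde w\|_{H^1(D)}$ yields $\|w\|_{H^1(D)}\preceq\|\widetilde w\|_{H^1(D)}$ with hidden constant $\sqrt{1+C_{PF}^2}$. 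The two points deserving care, rather than real obstacles, are the vanishing of the $D_1$-contributions in every pairing with $\chi_{D_1}$ (so that identities phrased on $D$ and on $D_0$ are interchangeable) and the membership $w\in H^1_0(D)$ needed to invoke Poincar\'e--Friedrichs, which rests on the zero boundary trace of $\widetilde w$.
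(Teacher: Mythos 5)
Your proof is correct and follows exactly the route the paper intends: the paper states this lemma without a written proof, remarking only that it "can be obtained using orthogonality relations of Galerkin projections," and your argument (orthogonality of $w$ to $\chi_{D_1}$ by the choice of $c(w)$, the resulting Pythagorean identity, and Poincar\'e--Friedrichs for the full $H^1$ norm) is precisely that argument fleshed out, mirroring the proof the paper does write for the multi-inclusion analogue in Lemma~\ref{lem:bound-wtilde-by-w}. Your attention to the equivalence of the two formulas for $c(w)$ via Remark~\ref{rem:property-of-chi} and to the zero-trace hypothesis needed for Poincar\'e--Friedrichs is appropriate and consistent with how the lemma is used.
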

The next lemma bound the norm of the $i-$th term by 
the norm of the $(i-1)-$th in the asymptotic 
expansion~\eqref{eq:expansion}.
\begin{lemma}\label{lem:boud-uiplus1-by-ui}
Let $\widetilde{u}_i$ defined on $D_0$ by~\eqref{eq:DirforuiD0} 
with $c_{i}=0$, 
and $u_{i+1}$ defined on $D_1$ by~\eqref{eq:Neuforuiplu1D1}.
For $i\geq 1$ we have that
\[
\|u_{i+1}\|_{H^1(D)}\preceq 
\|\widetilde{u}_{i}\|_{H^1(D_0)}.
\]
\end{lemma}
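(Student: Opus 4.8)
The plan is to peel the pieces of $u_{i+1}$ off one at a time, reducing the full $H^1(D)$ norm to the seminorm of the Neumann solution $\widetilde{u}_{i+1}^{(1)}$ inside $D_1$, and then to bound that by a conormal trace of $u_i^{(0)}$ on $\partial D_1$. First I would invoke Lemma~\ref{lem:bound-for-c(w)}. Since $\widetilde{u}_{i+1}$ is harmonic in $D_0$ and $u_{i+1}=\widetilde{u}_{i+1}+c_{i+1}\chi_{D_1}$ with the coefficient $c_{i+1}$ chosen exactly according to~\eqref{eq:def:ci}, that lemma applies verbatim and yields $\|u_{i+1}\|_{H^1(D)}\preceq\|\widetilde{u}_{i+1}\|_{H^1(D)}$, absorbing the constant part. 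Next I would control the background piece by the inclusion piece: $\widetilde{u}_{i+1}^{(0)}$ is the harmonic extension of $\widetilde{u}_{i+1}^{(1)}|_{\partial D_1}$ vanishing on $\partial D$, so the same trace argument already used for~\eqref{eq:boundu1tildeD0-by-u1tildeD1} gives $\|\widetilde{u}_{i+1}\|_{H^1(D_0)}\preceq\|\widetilde{u}_{i+1}\|_{H^{1/2}(\partial D_1)}\preceq\|\widetilde{u}_{i+1}\|_{H^1(D_1)}$, and therefore $\|\widetilde{u}_{i+1}\|_{H^1(D)}\preceq\|\widetilde{u}_{i+1}\|_{H^1(D_1)}$.

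It then remains to estimate $\|\widetilde{u}_{i+1}\|_{H^1(D_1)}$ from the Neumann problem~\eqref{eq:Neuforuiplu1D1}. I would test that problem with $z=\widetilde{u}_{i+1}$ and use the normalization $\int_{D_1}\widetilde{u}_{i+1}=0$ together with the trace inequality on $\partial D_1$ and Poincar\'e--Wirtinger to obtain $|\widetilde{u}_{i+1}|_{H^1(D_1)}\preceq\|\nabla u_i^{(0)}\cdot n_0\|_{H^{-1/2}(\partial D_1)}$; Poincar\'e--Wirtinger then promotes this seminorm to the full $H^1(D_1)$ norm. The crux of the proof is the conormal trace bound $\|\nabla u_i^{(0)}\cdot n_0\|_{H^{-1/2}(\partial D_1)}\preceq\|u_i^{(0)}\|_{H^1(D_0)}$: because $u_i^{(0)}$ is harmonic in $D_0$ for $i\ge 1$, the weak (variational) definition of the normal derivative together with integration by parts removes the volume Laplacian term and leaves only $\int_{D_0}\nabla u_i^{(0)}\cdot\nabla Z$ for an $H^1(D_0)$ extension $Z$ of the boundary test function, which is controlled by $|u_i^{(0)}|_{H^1(D_0)}\,\|z\|_{H^{1/2}(\partial D_1)}$; taking the supremum over such $z$ gives the claim.

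Finally I would strip the constant from $u_i^{(0)}=\widetilde{u}_i^{(0)}+c_i\chi_{D_1}^{(0)}$. A direct Cauchy--Schwarz estimate applied to the defining formula~\eqref{eq:def:ci} gives $|c_i|\,|\chi_{D_1}|_{H^1(D_0)}\le|\widetilde{u}_i|_{H^1(D_0)}$, and since every function involved vanishes on $\partial D$, the Poincar\'e--Friedrichs inequality converts the seminorms into full norms, so that $\|u_i^{(0)}\|_{H^1(D_0)}\preceq\|\widetilde{u}_i\|_{H^1(D_0)}$. Chaining the four estimates then produces $\|u_{i+1}\|_{H^1(D)}\preceq\|\widetilde{u}_i\|_{H^1(D_0)}$.

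I expect the conormal trace bound of the second paragraph to be the only genuinely delicate point, since it is exactly where the harmonicity of $u_i^{(0)}$ (to kill the Laplacian in the integration by parts) and the smoothness assumptions on $\partial D_1$ (to make the $H^{-1/2}$ dual pairing meaningful) are actually used. Everything else is bookkeeping with the already-established Lemma~\ref{lem:bound-for-c(w)}, the trace inequality~\eqref{eq:boundu1tildeD0-by-u1tildeD1}, and standard Poincar\'e inequalities, so I would keep those steps terse and concentrate the writing on justifying the trace estimate.
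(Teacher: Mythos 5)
Your proof is correct and follows essentially the same route as the paper's: Lemma~\ref{lem:bound-for-c(w)} to absorb the constant $c_{i+1}$, the Dirichlet/trace estimate $\|\widetilde{u}_{i+1}\|_{H^1(D_0)}\preceq\|\widetilde{u}_{i+1}\|_{H^1(D_1)}$, and the Neumann problem~\eqref{eq:Neuforuiplu1D1} to pass to $u_i^{(0)}$. You simply supply more detail at the two places the paper is terse --- the conormal-trace estimate behind $\|\widetilde{u}_{i+1}\|_{H^1(D_1)}\preceq\|u_i\|_{H^1(D_0)}$, and the final reduction from $\|u_i\|_{H^1(D_0)}$ to $\|\widetilde{u}_i\|_{H^1(D_0)}$, which the paper's displayed chain leaves implicit.
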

\begin{proof}
Let $i\geq 1$. 
Consider $\widetilde{u}_{i+1}$ defined by 
the Dirichlet problem~\eqref{eq:DirforuiD0}. 
From classical estimates of the solution on $D_0$ 
and the trace theorem on $D_1$, we have 
\[
\|\widetilde{u}_{i+1}\|_{H^1(D_0)}\preceq
\|\widetilde{u}_{i+1}\|_{H^{1/2}(\partial D_1)}\preceq
\|\widetilde{u}_{i+1}\|_{H^1(D_1)}.
\]
By considering the problem~\eqref{eq:Neuforuiplu1D1}
we conclude that 
\[
\|\widetilde{u}_{i+1}\|_{H^1(D_1)}\preceq 
\|{u}_{i}\|_{H^1(D_0)}.
\]

We have, form~\eqref{eq:def:ci}
and Lemma~\ref{lem:bound-for-c(w)}, we have
\[
\|{u}_{i+1}\|_{H^1(D)}\preceq 
\|\widetilde{u}_{i+1}\|_{H^1(D)}.
\]
Combining this last three inequalities we have 
\[
\|{u}_{i+1}\|_{H^1(D)}\preceq 
\|\widetilde{u}_{i+1}\|_{H^1(D)}\preceq \|u_{i}\|_{H^1(D_0)}.
\]
The  constants are independent of $i$ and depend 
only on the domain geometry and configuration, 
that is, on $D_1$ and $D_0$. In fact, 
the hidden constants depend on the trace theorem and 
solution estimates in $D_1$ and $D_0$,  see~\cite{Grisvard}.

\end{proof}

\begin{theorem}\label{thm:convergence-one-high}
There is a constant $C>0$ such that for every 
$\eta>C$, the expansion~\eqref{eq:expansion} converges 
(absolutely) in $H^1(D)$. The asymptotic limit 
$u_0$ satisfies problem~\eqref{eq:limitproblem} and $u_0$ 
can be computed using formula~\eqref{eq:u0-equal-u00-plus-chiD1}.
\end{theorem}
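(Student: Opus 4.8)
The plan is to reduce the statement to the convergence of a majorizing geometric series, built entirely from the uniform contraction estimate already in hand. Since $H^1(D)$ is complete, absolute convergence implies convergence in norm, so it suffices to prove that $\sum_{i=0}^\infty \eta^{-i}\|u_i\|_{H^1(D)}<\infty$ once $\eta$ exceeds a geometric constant. Throughout I abbreviate $a_i=\|u_i\|_{H^1(D)}$.

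First I would fix the base cases. Lemma~\ref{lemma:firstbounds} controls $|u_0|_{H^1(D)}$, $\|\widetilde u_1\|_{H^1(D_1)}$ and $\|\widetilde u_1\|_{H^1(D_0)}$ by $\|f\|_{H^{-1}(D)}+\|g\|_{H^{1/2}(\partial D)}$, and Lemma~\ref{lem:bound-for-c(w)} (applied with $w=u_1$, $\widetilde w=\widetilde u_1$) upgrades the bound on $\widetilde u_1$ to one on $u_1=\widetilde u_1+c_1\chi_{D_1}$. Hence $a_0$ and $a_1$ are finite and bounded by the data; these two terms must be treated separately, because the recursion below only begins at $i\ge 1$.

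The heart of the matter is the $i$-independent recursion furnished by Lemma~\ref{lem:boud-uiplus1-by-ui}, whose proof yields a constant $C_0>0$, depending only on $D_0$ and $D_1$ through the trace theorem and the elliptic solution estimates on those fixed domains, such that
\[
a_{i+1}=\|u_{i+1}\|_{H^1(D)}\ \preceq\ \|u_i\|_{H^1(D_0)}\ \le\ C_0\,a_i\qquad\text{for all } i\ge 1.
\]
Iterating gives $a_i\le C_0^{\,i-1}a_1$ for $i\ge 1$, so that
\[
\sum_{i=0}^\infty \eta^{-i}a_i\ \le\ a_0+a_1\sum_{i=1}^\infty \frac{C_0^{\,i-1}}{\eta^{\,i}}\ =\ a_0+\frac{a_1}{\eta-C_0},
\]
which is finite exactly when $\eta>C_0$. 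Setting $C:=C_0$ then delivers absolute, hence norm, convergence of the expansion~\eqref{eq:expansion} in $H^1(D)$ for every $\eta>C$. The remaining assertion, that the limit term $u_0$ solves~\eqref{eq:limitproblem} and is given by~\eqref{eq:u0-equal-u00-plus-chiD1}, is not a convergence statement at all: it was already established in the derivation of Section~\ref{sec:one}, where $u_0$ was constructed to satisfy precisely that problem and written in that closed form.

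The argument is essentially bookkeeping once the lemmas are available; the genuine work lives in Lemma~\ref{lem:boud-uiplus1-by-ui}. The single delicate point I expect to be the main obstacle is the uniformity of $C_0$ in $i$: the geometric-series bound collapses unless the contraction factor is the same at every level, and this is exactly what one must read off from the fact that the trace and interior elliptic estimates are posed on the fixed geometry $D_0,D_1$ and therefore carry constants independent of the index. If, beyond the stated convergence, one also wishes to identify the limit with $u_\eta$, I would verify that the $N$-th partial sum satisfies~\eqref{eq:problem-with-1-inclusion} up to a residual of order $\eta^{-N}$ and conclude by uniqueness; but that is a remainder estimate lying outside the present statement.
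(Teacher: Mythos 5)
Your proposal is correct and follows essentially the same route as the paper: iterate the uniform contraction estimate of Lemma~\ref{lem:boud-uiplus1-by-ui} to obtain a geometric bound $\|u_i\|_{H^1(D)}\preceq C^{i-1}\|u_1\|$ for $i\geq 1$, handle $u_0$ and $u_1$ separately via Lemmas~\ref{lemma:firstbounds} and~\ref{lem:bound-for-c(w)}, and sum the majorizing geometric series for $\eta>C$. Your explicit remark that the uniformity of the constant in $i$ rests on the trace and elliptic estimates being posed on the fixed domains $D_0,D_1$ is exactly the point the paper makes at the end of the proof of Lemma~\ref{lem:boud-uiplus1-by-ui}.
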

\begin{proof}
From Lemma~\ref{lem:boud-uiplus1-by-ui} applied 
repeatedly $i-1$ times, we get that  for every $i\geq 2$ 
 there is a constant $C$ such that
\begin{eqnarray*}
\|u_{i}\|_{H^1(D)}&\leq& C\|u_{i-1}\|_{H^1(D_0)}\leq
C\|u_{i-1}\|_{H^1(D)} \\
&\leq & \dots\\
&\leq& C^{i-1}\|\widetilde{u}_1\|_{H^1(D_0)}
\end{eqnarray*}
and then 
\[
\|\sum_{i=2}^\infty \eta^{-i}u_{i}\|_{H^1(D)}\leq 
\frac{\|\widetilde{u}_1\|_{H^1(D_0)} }{C}\sum_{i=2}^\infty\left(\frac{C}{\eta}\right)^{i}.
\]
The last expansion converges when $\eta>C$. 
Using~\eqref{eq:bound:u0-by-f-and-g} 
and~\eqref{eq:boundu1tilde-by-fandg} we conclude there 
is a constant $C_1$ such that
that \[\|\sum_{i=0}^\infty \eta^{-i}u_{i}\|_{H^1(D)} 
\preceq C_1 (\|f\|_{H^{-1}(D_1)}
+\|g\|_{H^{1/2}(D)}) \sum_{i=0}^\infty\left(\frac{C}{\eta}\right)^{i}.\]
Moreover, 
the asymptotic limit $u_0$ satisfies problem~\eqref{eq:limitproblem}.
\end{proof}

\begin{corollary}
There are positive  constants $C$ and $C_1$ such that for every 
$\eta>C$, we have 
\[
\|u-\sum_{i=0}^I \eta^{-i}u_{i}\|_{H^1(D)}\leq C_1
(\|f\|_{H^{-1}(D_1)}
+\|g\|_{H^{1/2}(D)})\sum_{i=I+1}^\infty\left(\frac{C}{\eta}\right)^{i},
\]
for $I\geq 0$.
\end{corollary}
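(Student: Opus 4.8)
The plan is to combine the convergence already established in Theorem~\ref{thm:convergence-one-high} with an identification of the limit of the series as the exact solution $u=u_\eta$, after which the corollary becomes a statement about the tail of an absolutely convergent series. The only genuinely new ingredient is this identification; everything else is bookkeeping with estimates already assembled in the proof of the theorem.

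First I would verify that $u=\sum_{i=0}^\infty \eta^{-i}u_i$ in $H^1(D)$, which is implicit but not quite spelled out in the theorem. To this end, set $S_N=\sum_{i=0}^N \eta^{-i}u_i$ and substitute it into the left-hand side of~\eqref{eq:problem-with-1-inclusion}. Reindexing the sum over $D_1$ and collecting equal powers of $\eta$ exactly as in~\eqref{eq:WeakSequence}, the term with $\eta^1$ vanishes by the construction of Section~\ref{sec:one}, the $\eta^0$ term reproduces $\int_D fv$, and the interior terms for $1\le i\le N-1$ cancel by~\eqref{eq:term-i}. What survives is a single residual, $a(S_N,v)=\int_D fv+\eta^{-N}\int_{D_0}\nabla u_N\cdot\nabla v$. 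Since $S_N=g$ on $\partial D$ by~\eqref{eq:expansionBC}, we have $u_\eta-S_N\in H^1_0(D)$, and coercivity of $a$ with a constant independent of $\eta$ for $\eta\ge 1$ (because $a(w,w)\ge |w|_{H^1(D)}^2$, the $D_1$-contribution being nonnegative) yields $\|u_\eta-S_N\|_{H^1(D)}\preceq \eta^{-N}\|u_N\|_{H^1(D_0)}$. The per-term bound from the proof of Theorem~\ref{thm:convergence-one-high} gives $\|u_N\|_{H^1(D_0)}\le C^{N-1}\|\widetilde u_1\|_{H^1(D_0)}$, so the right-hand side tends to $0$ as $N\to\infty$ whenever $\eta>C$; passing to the limit identifies $\sum_{i=0}^\infty \eta^{-i}u_i$ with $u=u_\eta$ by uniqueness of the weak solution.

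With this identification the truncation error is exactly the tail, $u-\sum_{i=0}^I \eta^{-i}u_i=\sum_{i=I+1}^\infty \eta^{-i}u_i$. I would then apply the triangle inequality together with the per-term estimate: iterating Lemma~\ref{lem:boud-uiplus1-by-ui} gives $\|u_i\|_{H^1(D)}\le C^{i-1}\|\widetilde u_1\|_{H^1(D_0)}$ for $i\ge 2$, while Lemma~\ref{lemma:firstbounds} (through~\eqref{eq:boundu1tilde-by-fandg} and~\eqref{eq:boundu1tildeD0-by-u1tildeD1}) controls $\|\widetilde u_1\|_{H^1(D_0)}$ by $\|f\|_{H^{-1}(D_1)}+\|g\|_{H^{1/2}(D)}$. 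Substituting and factoring out the source-dependent constant, the bound reads $\sum_{i=I+1}^\infty \eta^{-i}\|u_i\|_{H^1(D)}\preceq (\|f\|_{H^{-1}(D_1)}+\|g\|_{H^{1/2}(D)})\sum_{i=I+1}^\infty (C/\eta)^i$, which is the claimed estimate once the factor $C^{-1}$ is absorbed into $C_1$. Absolute convergence, and hence the legitimacy of term-by-term bounding, is guaranteed for $\eta>C$ by the theorem.

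The main obstacle is genuinely the first step: the theorem only asserts that the series converges, not that its limit is the physical solution $u$, so the corollary's left-hand side has no content until the identification is made, and the residual computation via~\eqref{eq:WeakSequence} is the crux. The one point requiring care there is that the coercivity constant of $a$ be taken independent of $\eta$, which holds for $\eta\ge 1$. Once the limit is identified, the remainder is routine geometric-tail bookkeeping already prefigured in the proof of Theorem~\ref{thm:convergence-one-high}; alternatively, one can bypass the series and read the estimate directly off the single-residual energy inequality $\|u_\eta-S_I\|_{H^1(D)}\preceq \eta^{-I}\|u_I\|_{H^1(D_0)}$, since for $\eta>C$ the full tail $\sum_{i=I+1}^\infty (C/\eta)^i$ is comparable to its leading term $(C/\eta)^{I+1}$.
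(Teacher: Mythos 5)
Your proposal is correct, and the second half (triangle inequality on the tail $\sum_{i=I+1}^\infty \eta^{-i}u_i$, the iterated bound $\|u_i\|_{H^1(D)}\le C^{i-1}\|\widetilde u_1\|_{H^1(D_0)}$ from Lemma~\ref{lem:boud-uiplus1-by-ui}, and the control of $\|\widetilde u_1\|$ via~\eqref{eq:boundu1tilde-by-fandg}) is exactly the argument the paper intends: the corollary is stated without proof as an immediate consequence of the geometric-tail estimate already displayed in the proof of Theorem~\ref{thm:convergence-one-high}. Where you go beyond the paper is the first step: Theorem~\ref{thm:convergence-one-high} only asserts that the series converges in $H^1(D)$, and the paper nowhere verifies that its sum equals $u_\eta$, yet the corollary's left-hand side is meaningless without that identification. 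Your residual computation --- showing $a(S_N,v)=\int_D fv+\eta^{-N}\int_{D_0}\nabla u_N\cdot\nabla v$ after the telescoping cancellations from~\eqref{eq:eta0} and~\eqref{eq:term-i}, then using $\eta$-uniform coercivity ($a(w,w)\ge |w|_{H^1(D)}^2$ for $\eta\ge 1$) and the boundary condition $S_N=g$ on $\partial D$ to conclude $\|u_\eta-S_N\|_{H^1(D)}\preceq \eta^{-N}\|u_N\|_{H^1(D_0)}\to 0$ --- is sound and fills this gap cleanly. Your closing observation that the single-residual energy inequality with $N=I$ yields the corollary directly, without first summing the whole series, is also a legitimate and slightly more economical route to the same bound.
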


We note that in the case of smooth boundaries 
$\partial D_1$, $\partial D$ and smooth 
Dirichlet data and forcing term,  we have
$H^{1+s}(D_1)$  and $H^{1+s}(D_0)$ regularity 
of all functions involved for $s>0$; see 
\cite{cgh09, Grisvard} and references therein. Estimates similar 
to the ones presented in this section will 
warrant that for $\eta$ sufficiently large, 
the expansion~\eqref{eq:expansion}-\eqref{eq:expansionBC}
will be absolutely converging in $H^{1+s}(D_1)$ and 
$H^{1+s}(D_0)$ for $\eta$ sufficiently large. 
A more delicate case is the case with non-smooth 
boundaries. This case and the convergence of 
the expansion in $H^{1+\tau}(D)$ for some small $\tau>0$ will 
object of future research.

\subsection{Multiple high-conductivity inclusions}\label{sec:high-multiple}
In this section we consider a coefficient with 
multiple high-conductivity inclusions. Let $\kappa$ be defined by 
\begin{equation}\label{eq:coeff1inc-multiple}
\kappa(x)=\left\{\begin{array}{cc} 
\eta,& x\in   D_m, ~~m=1,\dots,M, \\
1,& x\in  D_0=D\setminus \cup_{m=1}^M\overline{D}_m,
\end{array}\right.
\end{equation}
and denote by $u_{\eta}$
 the solution of~\eqref{eq:problem}
with zero Dirichlet boundary condition.
 We assume that 
$D_i$ is compactly included in the open set $D\setminus 
\cup_{\ell=1, \ell \not=m}^M\overline{D}_\ell$, i.e., 
$\overline{D}_m\subset D\setminus \cup_{\ell=1, \ell\not=m}^M \overline{D}_\ell$, and we define $D_0:= D\setminus 
\cup_{m=1}^M\overline{D}_m$.

Expansion~\eqref{eq:expansion}-\eqref{eq:expansionBC} holds 
in this case. We first describe the asymptotic problem 
in the next Section~\ref{sec:asymp-problem-multiple}. 
Then we will  quickly describe the expansion
in Section~\ref{sec:expansions-description} below.

\subsubsection{The solution of the asymptotic problem}\label{sec:asymp-problem-multiple}
Define the set of constant functions inside the inclusions, 
\[
V_{const}=\{ v\in H^1_0(D), \mbox{ such that } v|_{D_m} 
\mbox{ is  constant for all } m=1,\ldots,\, M\}.
\]
By analogy with the 
case of one high-conductivity inclusion, the asymptotic solution 
for the coefficient~\eqref{eq:coeff1inc-multiple} is $u_0$ 
that is constant in each high-conductivity inclusions. Moreover, 
$u_0$ solves the problem, 
\begin{equation}\label{eq:limitproblemmultiple}
\begin{array}{c}
 \int_{D_0}\nabla u_{0}\cdot \nabla z=\int_{D}fz 
\mbox{ for all } z\in V_{const},\\\\
u_0=g \mbox{ on } \partial D.  
\end{array}
\end{equation}

The problem above is elliptic and it has a unique solution. 
For $m=1,\dots,M$, define the  harmonic characteristic function
$\chi_{D_m}\in H^1_0(D)$ by 
\[
 \quad 
\chi_{D_m}\equiv\delta_{ m\ell} \mbox{ on } D_\ell \quad  \text{ for } \ell=1,\ldots,\, M,
\]
and, in $D_0$, $\chi_{D_m}$  is defined as the harmonic extension of its boundary data in $D_0$, i.e.,
\begin{equation}\label{eq:def:chiDl}
\begin{array}{rl}
\displaystyle \int_{D_0} \nabla \chi_{D_m} \cdot \nabla z =&0 \quad\mbox{ for all } z\in H^1_0(D_0)  \\
\chi_{D_m}=&\delta_{m\ell }\mbox{ on } \partial D_\ell \mbox{ for }
\ell=1,\dots,M,\\
\chi_{D_m}=&0 \mbox{ on } \partial D.
\end{array}
\end{equation}
Here, $\delta_{m\ell}$ represent the Kronecker delta, which is equal to 1 when $m=l$ and 0 otherwise.
Remark~\ref{rem:property-of-chi} 
holds if we replace the one inclusion case 
$\chi_{D_1}$ with the multi-inclusion case 
$\chi_{D_\ell}$ defined in~\eqref{eq:def:chiDl}. For instance,  if $w\in H^1(D)$  is harmonic in 
$D_0$ and constant $w=c_m$ in $D_m$, $m=1,\dots,M$, then, 
we can write $w=\sum_{m=1}^M c_m\chi_{D_m}$. 

We decompose $u_0$ into the 
 harmonic extension (to $D_0$) of a 
 function in $V_{const}$, plus a  function, $u_{0,0}$, 
with $g$  boundary 
condition on $\partial D$ and zero boundary condition 
on $\partial D_m$, $m=1,\dots,M$. We write,    
\begin{equation}\label{eq:linearcomb-multipleinc}
u_{0}=u_{0,0}+
\sum_{m=1}^M c_m(u_0)
\chi_{D_m}, 
\end{equation}
where $u_{0,0}\in H^1(D)$ with   
$u_{0,0}=0$ in $D_m$, $m=1,\dots,M,$ and 
$u_{0,0}$ solves the following problem in $D_0$,  
\begin{equation}\label{eq:def:u00-multipleinc}
\begin{array}{c}
\int_{D_0} \nabla u_{0,0}\cdot \nabla z =\int_{D_0}fz \mbox{ for all } z\in H^1_0(D_0),\\\\
u_{0,0}=0 \mbox{ on }  \partial D_m, \quad m=1,\dots,M,
 \quad \mbox{ and } 
u_{0,0}=g \mbox{ on } \partial D.
\end{array}
\end{equation}
Equation~\eqref{eq:linearcomb-multipleinc} is the analogous 
to Equation~\eqref{eq:u0-equal-u00-plus-chiD1}. 
Now we show how to compute the constants $c_i(u_0)$
using the same procedure as before.  From~\eqref{eq:limitproblem}, 
we have
\[
\int_{D_0}\nabla (u_{0,0}+
\sum_{m=1}^M c_m(u_0)
\chi_{D_m}
)\cdot \nabla \chi_{D_\ell} =\int_{D} f\chi_{D_\ell},\quad \text{for }\ell=1,\ldots,\, M,
\]
which is equivalent to the $M\times M$ linear system, 
\begin{equation}
\label{eq:Ageom}
A_{geom} X=B
\end{equation}
where $A=[a_{ij}]$, 
and $B=(b_1,\dots,b_M)\in\mathbb{R}^M$ are defined by 
\begin{equation}\label{eq:def:aij}
a_{ij}=\int_{D}\nabla \chi_{D_i}\nabla \chi_{D_j}=
\int_{D_0}\nabla \chi_{D_i}\nabla \chi_{D_j}, 
\end{equation}
\[
b_j=\int_{D}f\chi_{D_j} - \int_{D_0}\nabla u_{0,0}  \nabla \chi_{D_j}
\]
and $X=(c_1(u_0),\dots,c_M(u_{0}))\in\mathbb{R}^M$. We conclude that 
\begin{equation}\label{ref:vectorc-uinfty}
X=A_{geom}^{-1}B.
\end{equation}

We note that using $(\ref{eq:def:property-of-chiD1})$
for $\chi_{D_i}$ we have that 
\begin{equation}\label{eq:propertyaij}
a_{ij}=\int_{D}\nabla \chi_{D_i}\nabla \chi_{D_j}=
\int_{\partial D_i}\nabla \chi_{D_j}\cdot n_i=
\int_{\partial D_j}\nabla \chi_{D_j}\cdot n_j.
\end{equation}

Note that $\sum_{m=1}^Mc_m(u_0)\chi_{D_m}$ is the solution
of a Galerkin  projection in 
the space $\mbox{Span}\{\chi_{D_m}\}_{m=1}^M$.
The  forcing term for this problem is 
$f$ and there is  Neumann boundary  data  on $\partial D_m$ coming form 
$\nabla u_{0,0}\cdot n_0$. 
 Matrix $A_{geom}$ encodes the 
geometry information concerning the distribution 
of the inclusions inside the domain $D$, while it is 
independent of the contrast $\eta$. Note that in general $A_{geom}$
can be a large dense matrix. Because $\chi_{D_j}$ decay, one can 
approximate the system by a sparser system (e.g., 
see \cite{BerlyandNovikov,yulya-berlyan2,bp98}). Moreover, we can use
concepts similar to multiscale finite element methods and seek smaller
dimensional approximations for this large system.

\subsubsection{Expansion}\label{sec:expansions-description}
Now we describe how to compute the 
individual terms of the asymptotic 
expansion~\eqref{eq:expansion}-\eqref{eq:expansionBC}
for the case of multiple high-conductivity 
inclusions. 
\begin{itemize}
\item The function $u_0$ solves~\eqref{eq:limitproblemmultiple}.
\item The restriction of $u_1$ to the subdomain $D_m$, 
$u_1^{(m)}$,   can be written 
\[
u_1^{(m)}=\widetilde{u}_1^{(m)}+c_{1,m} \mbox{ where } 
\int_{D_m}\widetilde{u}^{(m)}=0,
\]
and $\widetilde{u}_1^{(m)}$ satisfies the Neumann problem,
\begin{equation}\label{eq:Neumanu1-multipleinc}
\int_{D_m}\nabla \widetilde{u}_1^{(m)}\cdot \nabla z=\int_{D_m}fz -
\int_{\partial D_m}\nabla u_0^{(0)}\cdot n_m  z
\quad \mbox{ for all } z\in H^1(D_m),
\end{equation}
for $m=1,\dots,M$. 
The constants $c_{1,m}$, $m=1,\dots,M,$ will be chosen later. 
\item For $i=1,2,\dots$, we have that 
given $u_i^{(m)}$ in $D_m$, 
$m=1,\dots,M$,  we can find $u_i^{(0)}$ in $D_0$ by  solving
 the Dirichlet problem
\begin{equation}\label{eq:DirforuiD0-multiple}
\begin{array}{l}
\displaystyle \int_{D_0}\nabla u_{i}^{(0)}\cdot \nabla z=0 \mbox{ for all } 
z\in H^1_0(D_0)\\\\
u_i^{(0)}=u_i^{(m)} ~~(=\widetilde{u}_i^{(m)}+c_{i,m})
\mbox{ on } \partial D_m, \quad m=1,\dots,M,
\quad \mbox{ and } \\ u_i^{(0)}=0 \mbox{ on } \partial D.
\end{array} 
\end{equation}

Since $c_{i,m}$ are  constants, the corresponding 
 harmonic extension 
is given by $\sum_{m} c_{i,m}\chi_{D_\ell}$. 
Then,  we conclude that 
\begin{equation}
u_i=\widetilde{u}_i+\sum_{m}c_{i,m}\chi_{D_m}
\end{equation}
where $\widetilde{u}_i^{(0)}$ is defined by~\eqref{eq:DirforuiD0-multiple}
 replacing all the constants $c_{i,m}$ by $0$. 

The $u_{i+1}^{(m )}$ in $D_m$ satisfy the following Neumann problem
\begin{equation}\label{eq:Neuforuiplu1D1-multiple}
 \int_{D_m}\nabla u_{i+1}^{(m)}\cdot \nabla z=-\int_{\partial D_m }
\nabla 
u_{i}^{(0)}\cdot n_0z \quad \mbox{ for all } 
z\in H^1(D_\ell).
\end{equation}

For the compatibility condition we need that for $\ell=1,\dots,M$, 
\begin{eqnarray*}
0=\int_{\partial D_\ell }\nabla u_{i+1}^{(\ell)}\cdot n_\ell&=&
-\int_{\partial D_\ell}\nabla u_{i}^{(0)}\cdot n_0\\&=&
-\int_{\partial D_\ell}\nabla(\widetilde{u}_{i}^{(0)}
+\sum_{m=1}^Mc_{i,m}
\chi_{D_m}^{(0)})\cdot n_0\\
&=&
-\int_{\partial D_\ell} \nabla \widetilde{u}_{i}^{(0)}\cdot n_0-
\sum_{m=1}^Mc_{i,m}\int_{\partial D_\ell}\nabla \chi_{D_m}^{(0)}\cdot n_0.\end{eqnarray*}
From~\eqref{eq:def:aij} and~\eqref{eq:propertyaij} 
we conclude that 
$Y_i=(c_{i,1},\dots,c_{i,M})$ is the solution of 
\begin{equation}\label{eq:def:Yi}
A_{geom} Y_i = {U}_i
\end{equation}
where 
\[
{U}=(-\int_{\partial D_{1}}
\nabla \widetilde{u}_i^{(0)}\cdot n_0, \dots, 
-\int_{\partial D_{m}}\widetilde{u}_i^{(0)}\cdot n_0).
\]
or (using Remark~\ref{rem:property-of-chi}), 
\[
{U}=(-\int_{D_0}
\nabla \widetilde{u}_i^{(0)}\nabla \chi_{D_1}, \dots, 
-\int_{ D_0}\nabla\widetilde{u}_i^{(0)}\nabla \chi_{D_M}).
\]

\end{itemize}
\subsubsection{Convergence in $H^1(D)$}
We first prove the result analogous to Lemma~\ref{lem:bound-for-c(w)}.
\begin{lemma}\label{lem:bound-wtilde-by-w}
Let $\widetilde{w}\in H^1(D)$ be harmonic 
in $D_0$ and define
$
w=\widetilde{w}+\sum_{m=1}^M c_m\chi_{D_m},
$
where $Y=(c_1,\dots,c_M)$ is the solution of the $M$ 
dimensional linear system 
\[
A_{geom}Y=-W
\]
with $W=(\int_{D_0}\nabla w\nabla \chi_{D_1}, 
\dots,\int_{D_0}\nabla w\nabla \chi_{D_m})$.
Then, 
\[
\|w\|_{H^1(D)}\preceq \|\widetilde{w}\|_{H^1(D)}
\]
where the hidden constant is the  Poincar\'e-Friedrichs  inequality 
constant of $D$.
\end{lemma}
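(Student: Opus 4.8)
The statement to prove is \textbf{Lemma~\ref{lem:bound-wtilde-by-w}}, the multiple-inclusion analogue of Lemma~\ref{lem:bound-for-c(w)}. The goal is to show that the map $\widetilde{w}\mapsto w=\widetilde{w}+\sum_{m=1}^M c_m\chi_{D_m}$, where the constants $c_m$ are chosen so that $w$ is the ``Galerkin correction'' of $\widetilde{w}$ orthogonal to the span of the harmonic characteristic functions, is bounded in $H^1(D)$ by the Poincar\'e--Friedrichs constant of $D$.

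The plan is to interpret the construction as a Galerkin (energy) orthogonality, exactly as in the single-inclusion Lemma~\ref{lem:bound-for-c(w)}, and then recover the full $H^1$ norm from the $H^1$ seminorm via Poincar\'e--Friedrichs. First I would verify that the choice of $Y=(c_1,\dots,c_M)$ solving $A_{geom}Y=-W$ is precisely the condition that $w$ is $a$-orthogonal to each $\chi_{D_\ell}$, where $a(\cdot,\cdot)=\int_D\nabla\cdot\,\nabla\cdot$. Indeed, using $a_{ij}=\int_{D_0}\nabla\chi_{D_i}\cdot\nabla\chi_{D_j}$ from~\eqref{eq:def:aij} and the fact that $\widetilde{w}$ is harmonic in $D_0$, the $\ell$-th equation of the system reads $\sum_{m=1}^M c_m\,a_{\ell m}=-\int_{D_0}\nabla\widetilde{w}\cdot\nabla\chi_{D_\ell}$, which rearranges to $\int_{D_0}\nabla(\widetilde{w}+\sum_m c_m\chi_{D_m})\cdot\nabla\chi_{D_\ell}=0$, i.e., $\int_D\nabla w\cdot\nabla\chi_{D_\ell}=0$ for all $\ell$ (the integral over $D_0$ equals the integral over $D$ since the $\chi_{D_m}$ are constant on each inclusion and hence have vanishing gradient there). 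Thus $w$ is the $a$-orthogonal component of $\widetilde{w}$ with respect to $\mathrm{Span}\{\chi_{D_m}\}_{m=1}^M$, and $\sum_m c_m\chi_{D_m}$ is its $a$-orthogonal projection onto that span.

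From orthogonality the seminorm Pythagorean identity follows immediately: since $w\perp_a \mathrm{Span}\{\chi_{D_m}\}$ and $\widetilde{w}=w-\sum_m c_m\chi_{D_m}$, we obtain
\begin{equation*}
|\widetilde{w}|_{H^1(D)}^2=|w|_{H^1(D)}^2+\Big|\sum_{m=1}^M c_m\chi_{D_m}\Big|_{H^1(D)}^2,
\end{equation*}
and in particular $|w|_{H^1(D)}\le|\widetilde{w}|_{H^1(D)}$. To upgrade the seminorm bound to a full-norm bound I would invoke Poincar\'e--Friedrichs on $D$: since $w\in H^1_0(D)$ (each $\chi_{D_m}$ and $\widetilde{w}$ vanish on $\partial D$), we have $\|w\|_{H^1(D)}\preceq|w|_{H^1(D)}\le|\widetilde{w}|_{H^1(D)}\le\|\widetilde{w}\|_{H^1(D)}$, with the hidden constant being the Poincar\'e--Friedrichs constant of $D$, exactly as claimed.

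The only genuinely delicate point, and the step I expect to require the most care, is the reduction from the domain $D$ to the background $D_0$ in the orthogonality relation, that is, justifying $\int_D\nabla w\cdot\nabla\chi_{D_\ell}=\int_{D_0}\nabla w\cdot\nabla\chi_{D_\ell}$ and relating it to the system matrix. This rests on two facts established earlier: that $\chi_{D_m}$ has zero gradient inside every inclusion, and that the multi-inclusion version of Remark~\ref{rem:property-of-chi} holds so that $a_{ij}$ in~\eqref{eq:def:aij} indeed coincides with the energy inner product restricted to $D_0$. Once these are in hand, everything reduces to finite-dimensional linear algebra plus the single application of Poincar\'e--Friedrichs, so the argument is a faithful and essentially mechanical generalization of the one-inclusion case; no new analytic estimate beyond the trace/regularity facts already cited is needed.
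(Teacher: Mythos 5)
Your proposal is correct and follows essentially the same route as the paper: both arguments rest on reading the linear system $A_{geom}Y=-W$ as a Galerkin (energy) projection onto $\mathrm{span}\{\chi_{D_m}\}_{m=1}^M$ and then upgrading the seminorm bound with Poincar\'e--Friedrichs. Two small points of divergence are worth noting. First, you silently read $W$ as built from $\widetilde{w}$ rather than from $w$ as literally written in the statement; this is surely the intended reading (it matches the single-inclusion Lemma~\ref{lem:bound-for-c(w)} and the use in~\eqref{eq:def:Yi}), and it is what makes your exact orthogonality $\int_D\nabla w\cdot\nabla\chi_{D_\ell}=0$ and the Pythagorean identity hold; with the literal definition one instead gets $2A_{geom}Y=-\widetilde{W}$ and the orthogonality fails by a factor, though the estimate survives. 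Second, in the final step the paper bounds $\bigl|\sum_m c_m\chi_{D_m}\bigr|_{H^1}$ by Cauchy--Schwarz, uses the triangle inequality $\|w\|\leq\|\widetilde{w}\|+\bigl\|\sum_m c_m\chi_{D_m}\bigr\|$, and applies Poincar\'e--Friedrichs only to the correction $\sum_m c_m\chi_{D_m}\in H^1_0(D)$; you instead apply Poincar\'e--Friedrichs to $w$ itself, which requires $\widetilde{w}=0$ on $\partial D$ --- true in the application to the $\widetilde{u}_i$, but not among the stated hypotheses of the lemma. If you want the lemma exactly as stated, route the Poincar\'e--Friedrichs step through the correction term as the paper does.
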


\begin{proof}
Note that $\sum_{m}^M c_m\chi_{D_m}$ is the 
Galerkin projection of $\widetilde{w}$ into the space 
$\mbox{span}\{\chi_{D_i}\}_{i=1}^M$. Then, 
as usual in Finite Element analysis of Galerkin formulations, we have
\begin{align*}
\displaystyle\int_{D_0} |\sum_{m=1}^M c_i \nabla \chi_{D_m}|^2 & = 
Y^TA_{geom}Y =-Y^TW\\
&=-\sum_{m=1}^M c_m \int_{D_0}\nabla w \nabla \chi_{{D_m}} \\
&=-\int_{D_0}\nabla w \nabla (\sum_{m=1}^M c_m\chi_{D_m})\\
&\leq|w|_{H^1(D_0)}\left|\sum_{m=1}^M c_m\chi_{D_m}\right|_{D_0}
\end{align*}
and then $\left|\sum_{m=1}^M c_m\chi_{D_m}\right|\leq |w|_{H^1(D_0)}$. 
Using a  Poincar\'e-Friedrichs  inequality we can write,
\[
\|w\|_{H^1(D)}\leq \|\widetilde{w}\|_{H^1(D)}+
\left\|\sum_{m=1}^M c_m\chi_{D_m}\right\|\preceq \|\widetilde{w}\|_{H^1(D)}.
\]\end{proof}

Combining Lemma~\ref{lem:bound-wtilde-by-w} with 
 results analogous to Lemmas~\ref{lemma:firstbounds} 
and~\ref{lem:boud-uiplus1-by-ui} 
we get convergence for the 
expansion~\eqref{eq:expansion}-\eqref{eq:expansionBC}.
\begin{theorem}\label{thm:convergence-multiple-high}
Consider the problem~\eqref{eq:problem} 
with coefficient~\eqref{eq:coeff1inc-multiple}.  The 
corresponding expansion~\eqref{eq:expansion}-\eqref{eq:expansionBC}
converges absolutely in $H^1(D)$ for $\eta$ sufficiently large.  
Moreover, there exist positive constants $C$ and $C_1$ such that for every 
$\eta>C$, we have 
\[
\|u-\sum_{i=0}^I \eta^{-i}u_{i}\|_{H^1(D)}\leq 
C_1\left(\|f\|_{H^{-1}(D_1)}
+\|g\|_{H^{1/2}(D)} \right)\sum_{i=I+1}^\infty\left(\frac{C}{\eta}\right)^{i},
\]
for $I\geq 0$.
\end{theorem}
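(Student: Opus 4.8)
The plan is to mirror the proof of Theorem~\ref{thm:convergence-one-high}, replacing each single-inclusion ingredient by its multi-inclusion counterpart. The engine is a recursive energy estimate $\|u_{i+1}\|_{H^1(D)}\preceq \|\widetilde{u}_i\|_{H^1(D_0)}$ with a constant $C$ \emph{independent of} $i$; once this is in hand, the tail of the expansion is dominated by a geometric series that converges precisely when $\eta>C$, and the data-dependent truncation bound follows by feeding in the base-case estimates for $u_0$ and $\widetilde{u}_1$.

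First I would establish the multi-inclusion analog of Lemma~\ref{lemma:firstbounds}, bounding $u_0$ and $\widetilde{u}_1$ by $\|f\|_{H^{-1}}$ and $\|g\|_{H^{1/2}}$. For $u_0$ I use the decomposition~\eqref{eq:linearcomb-multipleinc}: the piece $u_{0,0}$ is controlled by standard elliptic estimates for the Dirichlet problem~\eqref{eq:def:u00-multipleinc}, while the coefficient vector $X=A_{geom}^{-1}B$ is controlled because $A_{geom}$ is symmetric positive definite (it is the Gram matrix of the linearly independent $\nabla\chi_{D_m}$ by~\eqref{eq:def:aij}) with a condition number depending only on the geometry, and $B$ is bounded by the data and $\|u_{0,0}\|$. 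The bound for $\widetilde{u}_1^{(m)}$ comes from the classical Neumann estimate applied to~\eqref{eq:Neumanu1-multipleinc} on each $D_m$ and summed over $m=1,\dots,M$.

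Next I would prove the multi-inclusion analog of Lemma~\ref{lem:boud-uiplus1-by-ui} by chaining three estimates. Given $\widetilde{u}_i$ harmonic in $D_0$: (a) the Neumann estimate for~\eqref{eq:Neuforuiplu1D1-multiple} on each inclusion gives $\|\widetilde{u}_{i+1}^{(m)}\|_{H^1(D_m)}\preceq \|u_i^{(0)}\|_{H^1(D_0)}$, since the flux data is the trace of $\nabla u_i^{(0)}$; (b) the Dirichlet estimate together with the trace theorem for~\eqref{eq:DirforuiD0-multiple} gives $\|\widetilde{u}_{i+1}\|_{H^1(D_0)}\preceq \sum_m\|\widetilde{u}_{i+1}^{(m)}\|_{H^1(D_m)}$; and (c) the already-proven Lemma~\ref{lem:bound-wtilde-by-w} gives $\|u_{i+1}\|_{H^1(D)}\preceq \|\widetilde{u}_{i+1}\|_{H^1(D)}$. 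Composing these yields $\|u_{i+1}\|_{H^1(D)}\preceq \|u_i\|_{H^1(D_0)}$ with a geometric constant $C$. Iterating $i-1$ times gives $\|u_i\|_{H^1(D)}\leq C^{i-1}\|\widetilde{u}_1\|_{H^1(D_0)}$, whence the tail $\sum_{i\geq I+1}\eta^{-i}u_i$ is bounded by $\|\widetilde{u}_1\|_{H^1(D_0)}\sum_{i\geq I+1}(C/\eta)^i$; combining with the base-case bounds from the first step produces the stated estimate, and absolute convergence for $\eta>C$ follows exactly as in the Corollary to Theorem~\ref{thm:convergence-one-high}.

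\textbf{Main obstacle.} The delicate point, and the only genuine departure from the single-inclusion argument, is securing uniformity of $C$ in $i$. Here the constants $c_{i,m}$ are no longer given by a scalar flux-balance formula but are coupled across inclusions through the linear system~\eqref{eq:def:Yi} governed by $A_{geom}$. The key is that this coupling is benign: Lemma~\ref{lem:bound-wtilde-by-w} already shows that the full correction $\sum_m c_{i,m}\chi_{D_m}$ is the Galerkin projection of $\widetilde{u}_i$ onto $\mathrm{span}\{\chi_{D_m}\}_{m=1}^M$, so its energy is controlled by $|u_i|_{H^1(D_0)}$ with the Poincar\'e--Friedrichs constant of $D$, independently of $i$ and $\eta$. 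Every other constant in the chain (trace theorems, the elliptic and Neumann solution estimates on each $D_m$ and on $D_0$, and the conditioning of $A_{geom}$ encoding~\eqref{eq:Ageom}) is purely geometric. Once this $i$-independence is verified, the remainder of the proof is identical in structure to the one-inclusion case.
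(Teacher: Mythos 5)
Your proposal is correct and follows essentially the same route as the paper, which proves this theorem simply by invoking Lemma~\ref{lem:bound-wtilde-by-w} together with the multi-inclusion analogues of Lemmas~\ref{lemma:firstbounds} and~\ref{lem:boud-uiplus1-by-ui} and then iterating the resulting recursive bound into a geometric series, exactly as in Theorem~\ref{thm:convergence-one-high}. In fact you supply more detail than the paper does on the one genuinely new point, namely that the coupling of the constants $c_{i,m}$ through $A_{geom}$ is controlled uniformly in $i$ via the Galerkin-projection argument of Lemma~\ref{lem:bound-wtilde-by-w}.
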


\section{The case of low-conductivity inclusions}\label{sec:low}

In this section we derive and analyze expansions 
for the case of low-conductivity inclusions. 
As before, we present the case of one single inclusion 
first (see Section~\ref{small-sec:one}) and analyze the general case in Section~\ref{small-sec:multiple}.
\subsection{Expansion  derivation: one low-conductivity inclusion}
\label{small-sec:one}

Let $\kappa$ be defined by 
\begin{equation}\label{small-eq:coeff1inc}
\kappa(x)=\left\{\begin{array}{ll} 
\epsilon,& x\in   D_1,  \\
1,\quad& x\in  D_0=D\setminus \overline{D}_1,
\end{array}\right.
\end{equation}
and denote by $u_{\epsilon}$ the solution of~\eqref{eq:problem}.
 We  assume that 
$D_1$ is compactly included in $D$ ($\overline{D}_1\subset D$).
Since $u_\epsilon$ is solution of~\eqref{eq:problem} with 
the coefficient~\eqref{small-eq:coeff1inc} we have
\begin{equation}\label{small-eq:problem-with-1-inclusion}
\int_{D_0}\nabla u_{\epsilon}\cdot \nabla v +\epsilon \int_{D_1}\nabla u_\epsilon\cdot \nabla v=
\int_{D}fv \quad \mbox{ for all } v\in H_0^1(D).
\end{equation}

We try to determine $\{u_i\}_{i=-1}^\infty
\subset H^1_0(D)$ such that, 
\begin{equation}\label{small-eq:expansion}
u_\epsilon=\epsilon^{-1}u_{-1}+u_0+\epsilon u_1+\epsilon^2 u_2+\dots=
\sum_{i=-1}^\infty \epsilon^{i} u_{i}, 
\end{equation}
and such that they satisfy the following Dirichlet boundary 
conditions, 
\begin{equation}\label{small-eq:expansionBC}
u_0=g \mbox{ on } \partial D \quad \mbox{ and }
\quad u_{i}=0 \mbox{ on } \partial D \mbox{ for } i=-1,
\mbox{ and }  i\geq 1.
\end{equation}
Observe that when $u_{-1}\not= 0$, then, $u_{\epsilon}$ 
does not converge when $\epsilon\to 0$. 

If we substitute~\eqref{small-eq:expansion} into~\eqref{small-eq:problem-with-1-inclusion}
we obtain that for all $v\in H^1_0(D)$ we have, 
\[
\epsilon^{-1}\int_{D_0}\nabla u_{0}\cdot \nabla v+
\sum_{i=0}^\infty 
\epsilon^{i}\left(
 \int_{D_0}\nabla u_{i}\cdot \nabla v+\int_{D_1}\nabla u_{i-1}\cdot \nabla v\right)
=\int_{D}fv.
\]
Now we equate powers of $\epsilon$ and analyze all the resulting 
subdommain equations. 

\subsubsection*{Term corresponding to  $\epsilon^{-1}:$} We obtain the equation 

\begin{equation}\label{eq:eps-1}
\int_{D_0}\nabla u_{-1}\cdot \nabla v=0 \mbox{ for all } v\in H^1_0(D).
\end{equation}
Since we assumed $u_{-1}=0$ on $\partial D$, we conclude that 
$\nabla u_{-1}=0$ in $D_0$ and then $u_{-1}^{(0)}=0$  in $D_0$.

\subsubsection*{Term corresponding to $\epsilon^0=1:$} 
We get the equation 
\begin{equation}\label{eq:eps0}
\int_{D_0}\nabla u_{0}\cdot \nabla v+\int_{D_1}\nabla u_{-1}\cdot \nabla v=\int_{D}fv 
\mbox{ for all } v\in H^1_0(D).
\end{equation}
Since $u_{-1}^{(0)}=0$ in $D_0$, we conclude that 
$u_{-1}^{(1)}$ satisfies the following Dirichlet problem in $D_1$, 
\begin{equation}\label{eq:DirProbU-1-eps}
\begin{array}{c}
\int_{D_1} \nabla u_{-1}^{(1)}\cdot \nabla z =\int_{D_1}fz \quad \mbox{ for all } z\in 
H^1_0(D_1)\\\\
u_{-1}^{(1)}=0 \mbox{ on } \partial D_1.
\end{array}
\end{equation}

Now we compute $u_{0}^{(0)}$ in $D_0$. As before, from~\eqref{eq:eps0}, 
\[
\nabla u_0^{(0)}\cdot n_0=-\nabla u_{-1}^{(1)}\cdot n_1 \mbox{ on } 
\partial D_1.
\]
Then we can obtain $u_0^{(0)}$ in $D_0$ by solving 
the following problem
\begin{equation}\label{eq:Neumanu1-eps}
\begin{array}{c}
\int_{D_0}\nabla u_0^{(0)}\cdot \nabla z=\int_{D_0}fz -
\int_{\partial D_1 }\nabla u_{-1}^{(1)}\cdot n_1  z
\quad \forall z\in H^1(D_0) \mbox{ with }
z=0 \mbox{ on } \partial D,\\\\
u_{0}^{(0)}=g \mbox{ on } \partial D\subset \partial D_0.
\end{array}
\end{equation}

\subsubsection*{Term corresponding to $\epsilon^i$ with $i\geq 1$:}
We get the equation
\[
 \int_{D_0}\nabla u_{i}\cdot \nabla v+\int_{D_1}\nabla u_{i-1}\cdot \nabla v=0
\]
which implies 
that $u_i^{(1)}$ is harmonic in $D_1$ for all $i\geq 0$ and that 
$u_i^{(0)}$ is harmonic in $D_0$ for $i\geq 1$. Also, 
\[
\nabla u_{i}^{(0)}\cdot n_0=-\nabla u_{i-1}^{(1)}\cdot n_1.
\]

Given $u_{i-1}^{(0)}$ in $D_0$ (e.g., $u_{0}$ in $D_0$ above) 
we can find 
$u_{i-1}^{(1)}$ in $D_1$ by  solving
 the Dirichlet problem with the known Dirichlet data, 
\begin{equation}\label{eq:DirforuiD0-eps}
\begin{array}{c}
 \int_{D_1}\nabla u_{i-1}^{(1)}\cdot \nabla z=0 \mbox{ for all } 
z\in H^1_0(D_1)  \\\\
u_{i-1}^{(1)}=u_{i-1}^{(0)}  \mbox{ on } \partial D_1.
\end{array}
\end{equation}

To find 
$u_{i}^{(0)}$ in $D_0$ we solve the problem
\begin{equation}\label{eq:Neuforuiplu1D1-eps}
\begin{array}{c}\displaystyle
 \int_{D_0}\nabla u_{i}^{(0)}\cdot \nabla z=-\int_{\partial D_0 }\nabla 
u_{i-1}^{(1)}\cdot n_1z \quad \mbox{ for all } 
z\in H^1(D_0) \mbox{ with } z=0 \mbox{ on } \partial D,\\\\
u_{i}^{(0)}=0\mbox{ on } \partial D.
\end{array}
\end{equation}

\subsection{Convergence in $H^1(D)$}
In this section we study the convergence of 
the expansion~\eqref{small-eq:expansion}-\eqref{small-eq:expansionBC}.
The following lemma is obtained using classical estimates 
and trace theorems in the involved subdomains. 
\begin{lemma}
Let $u_{-1}$ vanish in $D_0$ and be defined using 
problem~\eqref{eq:DirProbU-1-eps} in $D_1$. We have 
\[
\|u_{-1}\|_{H^1(D_1)}\preceq \|f\|_{H^{-1}(D_1)}
\]
and 
\[
\|u_{0}\|_{H^1(D_0)}\preceq  \|f\|_{H^{-1}(D_0)}+
\|u_{-1}\|_{H^1(D_1)}+\|g\|_{H^{1/2}(\partial D)}.
\]
Moreover, if we consider problems~\eqref{eq:DirforuiD0-eps}
 and~\eqref{eq:Neuforuiplu1D1-eps}, we have that 
for $i\geq 1$, 
$
\|u_{i}\|_{H^1(D_1)}\preceq \|u_{i}\|_{H^1(D_0)},
$
$
\|u_{i}\|_{H^1(D_0)}\preceq \|u_{i-1}\|_{H^1(D_1)}
$
and 
\[
\|u_{i}\|_{H^1(D)}\preceq \|u_{i-1}\|_{H^1(D_1)}.
\]
\end{lemma}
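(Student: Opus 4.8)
The plan is to prove each of the stated bounds by exactly the same pattern used in the high-conductivity analysis (Lemmas~\ref{lemma:firstbounds} and~\ref{lem:boud-uiplus1-by-ui}): invoke standard elliptic a priori estimates for each subdomain problem, and glue the subdomain bounds together using the trace theorem. First I would establish the bound on $u_{-1}$. Since $u_{-1}^{(0)}=0$ in $D_0$ and $u_{-1}^{(1)}$ solves the homogeneous-Dirichlet problem~\eqref{eq:DirProbU-1-eps} in $D_1$ with forcing $f$, the classical a priori estimate for the Dirichlet problem on $D_1$ gives directly
\[
\|u_{-1}\|_{H^1(D_1)}\preceq \|f\|_{H^{-1}(D_1)},
\]
with the hidden constant depending only on $D_1$. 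This requires no interface argument because $u_{-1}$ is supported entirely in the inclusion.

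Next I would treat $u_0$. The function $u_0^{(0)}$ solves the mixed problem~\eqref{eq:Neumanu1-eps} in $D_0$: Dirichlet data $g$ on $\partial D$ and Neumann data $-\nabla u_{-1}^{(1)}\cdot n_1$ on $\partial D_1$, with interior forcing $f$. The a priori estimate for this mixed boundary value problem bounds $\|u_0^{(0)}\|_{H^1(D_0)}$ by the sum of the $H^{-1}(D_0)$ norm of $f$, the $H^{1/2}(\partial D)$ norm of $g$, and the $H^{-1/2}(\partial D_1)$ norm of the Neumann datum. The only point needing care is controlling the flux term: by the trace/normal-trace theorem the conormal derivative satisfies $\|\nabla u_{-1}^{(1)}\cdot n_1\|_{H^{-1/2}(\partial D_1)}\preceq \|u_{-1}\|_{H^1(D_1)}$ (using that $u_{-1}^{(1)}$ solves an elliptic equation in $D_1$, so its conormal trace is well defined and bounded by the solution's energy). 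Combining these yields
\[
\|u_0\|_{H^1(D_0)}\preceq \|f\|_{H^{-1}(D_0)}+\|u_{-1}\|_{H^1(D_1)}+\|g\|_{H^{1/2}(\partial D)}.
\]

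For the tail estimates with $i\geq 1$, I would argue exactly as in Lemma~\ref{lem:boud-uiplus1-by-ui} but with the roles of $D_0$ and $D_1$ swapped relative to the high-conductivity case. Given $u_{i-1}^{(0)}$ on $D_0$, the function $u_{i-1}^{(1)}$ solves the Dirichlet problem~\eqref{eq:DirforuiD0-eps} in $D_1$ with boundary data $u_{i-1}^{(0)}$ on $\partial D_1$; a solution estimate on $D_1$ followed by the trace theorem on $\partial D_1$ gives $\|u_i\|_{H^1(D_1)}\preceq \|u_i\|_{H^1(D_0)}$ and, more to the point, $\|u_{i-1}^{(1)}\|_{H^1(D_1)}\preceq \|u_{i-1}^{(0)}\|_{H^{1/2}(\partial D_1)}\preceq \|u_{i-1}^{(0)}\|_{H^1(D_0)}$. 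Then $u_i^{(0)}$ solves the mixed problem~\eqref{eq:Neuforuiplu1D1-eps} in $D_0$ with zero Dirichlet data on $\partial D$ and Neumann data $-\nabla u_{i-1}^{(1)}\cdot n_1$ on $\partial D_1$; bounding the conormal trace by $\|u_{i-1}^{(1)}\|_{H^1(D_1)}$ as above and applying the a priori estimate for the $D_0$ problem gives $\|u_i\|_{H^1(D_0)}\preceq \|u_{i-1}\|_{H^1(D_1)}$. Chaining the two subdomain bounds produces the final inequality $\|u_i\|_{H^1(D)}\preceq \|u_{i-1}\|_{H^1(D_1)}$.

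The main obstacle, such as it is, is the same technical point that appeared in the high-conductivity case: making rigorous the estimate of the normal flux on the interface, namely $\|\nabla w\cdot n_1\|_{H^{-1/2}(\partial D_1)}\preceq \|w\|_{H^1(D_1)}$ for a solution $w$ of an elliptic equation in $D_1$. This is precisely where the smoothness assumptions on $\partial D_1$ (and the regularity results cited from~\cite{Grisvard}) are used, and where the hidden constants acquire their dependence on the geometry of $D_0$ and $D_1$ but remain independent of $i$ and $\epsilon$. Unlike the high-conductivity setting, no compatibility condition or finite-dimensional correction is needed here, since the Neumann problems are now posed on the connected background domain $D_0$ whose boundary includes $\partial D$ (where a Dirichlet condition is imposed), so the associated bilinear form is coercive and the solutions are uniquely determined without any flux-balancing step.
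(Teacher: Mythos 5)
Your proposal is correct and follows exactly the route the paper intends: the paper gives no written proof of this lemma, stating only that it ``is obtained using classical estimates and trace theorems in the involved subdomains,'' and your argument supplies precisely those details (a priori estimates for the Dirichlet problem on $D_1$, the mixed problem on $D_0$ with the conormal trace bounded in $H^{-1/2}(\partial D_1)$, and the trace theorem to chain the subdomain bounds). Your closing observation that no flux-compatibility condition is needed here, since the Neumann data sit on $\partial D_1$ but the problem on $D_0$ carries Dirichlet data on $\partial D$, matches the paper's own remarks.
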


The convergence of the expansion follows.
\begin{theorem}
There is a constant $C>0$ such that for every 
$\epsilon<1/C$, the expansion~\eqref{small-eq:expansion} converges 
(absolutely) in $H^1(D)$.
\end{theorem}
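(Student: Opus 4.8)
The plan is to mirror the proof of Theorem~\ref{thm:convergence-one-high} from the high-conductivity case, since the low-conductivity expansion~\eqref{small-eq:expansion}-\eqref{small-eq:expansionBC} has exactly the same telescoping structure, only with the roles of $D_0$ and $D_1$ interchanged and with the expansion running in powers of $\epsilon$ rather than $1/\eta$. The preceding Lemma already supplies every ingredient: it bounds the first two terms $u_{-1}$ and $u_0$ by the data $\|f\|_{H^{-1}}$ and $\|g\|_{H^{1/2}(\partial D)}$, and, crucially, it establishes the recursive step $\|u_i\|_{H^1(D)}\preceq\|u_{i-1}\|_{H^1(D_1)}$ for $i\geq 1$ with a hidden constant $C$ independent of $i$ (depending only on the trace theorem and the Dirichlet/Neumann solution estimates on $D_0$ and $D_1$).

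First I would fix the constant $C$ to be the uniform hidden constant from the recursive inequality in the Lemma, so that $\|u_i\|_{H^1(D)}\le C\,\|u_{i-1}\|_{H^1(D_1)}\le C\,\|u_{i-1}\|_{H^1(D)}$ for all $i\geq 1$. Iterating this bound $i-1$ times gives the geometric decay
\[
\|u_i\|_{H^1(D)}\le C^{\,i-1}\|u_0\|_{H^1(D_0)}\preceq C^{\,i-1}\bigl(\|f\|_{H^{-1}(D)}+\|g\|_{H^{1/2}(\partial D)}\bigr),
\]
where the last step invokes the Lemma's bound on $\|u_0\|_{H^1(D_0)}$ (which itself absorbs $\|u_{-1}\|_{H^1(D_1)}$ via the Lemma's first estimate). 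Then I would estimate the tail of the series~\eqref{small-eq:expansion} in $H^1(D)$ by
\[
\Bigl\|\sum_{i=1}^\infty \epsilon^{i}u_i\Bigr\|_{H^1(D)}
\le \sum_{i=1}^\infty \epsilon^{i}\|u_i\|_{H^1(D)}
\preceq \bigl(\|f\|_{H^{-1}(D)}+\|g\|_{H^{1/2}(\partial D)}\bigr)\sum_{i=1}^\infty (C\epsilon)^{i},
\]
and observe that this geometric series converges precisely when $C\epsilon<1$, i.e. $\epsilon<1/C$. Adding back the two leading terms $\epsilon^{-1}u_{-1}+u_0$, each controlled by the data, yields absolute convergence of the full expansion in $H^1(D)$ for $\epsilon<1/C$.

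The main obstacle — and really the only nontrivial point — is confirming that the single recursion constant $C$ is genuinely uniform in $i$, since otherwise the series could fail to be geometric. This reduces to checking that the two alternating estimates furnished by the Lemma compose correctly: the Dirichlet harmonic-extension problem~\eqref{eq:DirforuiD0-eps} on $D_1$ combined with a trace bound gives $\|u_{i-1}\|_{H^1(D_1)}\preceq\|u_{i-1}\|_{H^1(D_0)}$, and the mixed Neumann problem~\eqref{eq:Neuforuiplu1D1-eps} on $D_0$ gives $\|u_i\|_{H^1(D_0)}\preceq\|u_{i-1}\|_{H^1(D_1)}$; both constants are fixed by the geometry of $D_0$, $D_1$ alone and are independent of the index, as in Lemma~\ref{lem:boud-uiplus1-by-ui}. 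One should note that, unlike the high-conductivity case, here the Neumann problems are posed on the background domain $D_0$ and carry no interior compatibility condition, so no harmonic-characteristic-function argument is needed to balance fluxes; this actually simplifies the present argument. Finally I would record that, exactly as in the Corollary following Theorem~\ref{thm:convergence-one-high}, the same geometric tail estimate gives the truncation error $\|u_\epsilon-\sum_{i=-1}^{I}\epsilon^{i}u_i\|_{H^1(D)}\preceq\bigl(\|f\|_{H^{-1}}+\|g\|_{H^{1/2}}\bigr)\sum_{i=I+1}^\infty(C\epsilon)^{i}$ for $\epsilon<1/C$.
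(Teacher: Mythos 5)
Your proposal is correct and follows essentially the same route as the paper: both iterate the recursive bound $\|u_i\|_{H^1(D)}\preceq\|u_{i-1}\|_{H^1(D_1)}$ from the preceding lemma to obtain geometric decay of the terms, sum the resulting geometric series for $\epsilon<1/C$, and control the leading terms $u_{-1}$ and $u_0$ directly by the data. Your added emphasis on the uniformity of the recursion constant in $i$ and on the absence of a compatibility condition for the Neumann problems on $D_0$ is consistent with, and slightly more explicit than, the paper's argument.
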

\begin{proof}
 There is a constant $C$ such that, for every $i\geq 1$ we have 
\begin{eqnarray}
\|u_{i}\|_{H^1(D)}&\leq& C\|u_{i-1}\|_{H^1(D_1)}\leq
C\|u_{i-1}\|_{H^1(D)} \\
&\leq & \dots\\
&\leq& C^{i}\|{u}_0\|_{H^1(D_1)}
\end{eqnarray}
and then 
\[
\|\sum_{i=1}^\infty \epsilon^{i}u_{i}\|_{H^1(D)}\leq 
\frac{\|{u}_1\|_{H^1(D_0)} }{C}\sum_{i=1}^\infty\left(C\epsilon\right)^{i}.
\]
The last series converges when $\epsilon<1/C$. Using the bound 
for $u_0,u_1$ and $u_{-1}$ we obtain that there is a constant $C_1$ such that 

\[
\|\sum_{i=0}^\infty \epsilon^{i}u_{i}\|_{H^1(D)}\leq 
C_1 (\|f\|_{H^{-1}(D_0)}+
\|f\|_{H^{-1}(D_1)}+\|g\|_{H^{1/2}(\partial D)})\sum_{i=1}^\infty\left(C\epsilon\right)^{i}.
\]

\end{proof}
\begin{corollary}\label{cor:convergence}
There are positive constants $C_1$ and $C$ such that for every 
$\epsilon<1/C$, we have 
\begin{eqnarray*}
&&\|u-u_{-1}-\sum_{i=1}^I \epsilon^{i}u_{i}\|_{H^1(D)} \leq \\
&&C_1(\|f\|_{H^{-1}(D_0)}+
\|f\|_{H^{-1}(D_1)}+\|g\|_{H^{1/2}(\partial D)})\sum_{i=I+1}^\infty\left(C\epsilon\right)^{i},
\end{eqnarray*}
for $I\geq 0$.
\end{corollary}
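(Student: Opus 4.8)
The plan is to read the corollary off as a quantitative tail estimate of the very geometric series constructed in the proof of the preceding convergence theorem; the corollary is essentially that argument re-indexed to begin the summation at $i=I+1$ rather than at $i=1$. Before doing this, one must know that the formally constructed series actually represents the exact solution, i.e. that $u=u_\epsilon=\sum_{i=-1}^\infty \epsilon^i u_i$ in $H^1(D)$. I regard this identification as the only genuinely non-routine ingredient; everything after it is bookkeeping with the bounds already available in the preceding lemma.

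First I would establish the identification. Let $S:=\sum_{i=-1}^\infty \epsilon^i u_i$, which converges absolutely in $H^1(D)$ by the preceding theorem whenever $\epsilon<1/C$. Since $u_0=g$ on $\partial D$ and every other $u_i$ vanishes there, the sum $S$ satisfies $S=g$ on $\partial D$. The functions $u_i$ were constructed precisely by matching equal powers of $\epsilon$ in the weak formulation~\eqref{small-eq:problem-with-1-inclusion}; because the series converges in $H^1(D)$ and the bilinear form is continuous, one may pass the form through the sum and verify that $S$ satisfies $\int_{D_0}\nabla S\cdot\nabla v+\epsilon\int_{D_1}\nabla S\cdot\nabla v=\int_D fv$ for all $v\in H^1_0(D)$. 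By uniqueness of the weak solution, $S=u$. With this in hand the truncation error is exactly the tail of the expansion~\eqref{small-eq:expansion}, namely $u$ minus the partial sum through order $\epsilon^I$ equals $\sum_{i=I+1}^\infty \epsilon^i u_i$.

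Next I would bound this tail in $H^1(D)$. Iterating the recursive estimate of the preceding lemma, $\|u_i\|_{H^1(D)}\preceq\|u_{i-1}\|_{H^1(D_1)}\leq\|u_{i-1}\|_{H^1(D)}$, down to the base term yields $\|u_i\|_{H^1(D)}\leq C^i\|u_0\|_{H^1(D_1)}$ up to a fixed constant independent of $i$ and $\epsilon$. Summing termwise gives
\[
\left\| \sum_{i=I+1}^\infty \epsilon^i u_i \right\|_{H^1(D)} \leq \sum_{i=I+1}^\infty \epsilon^i \|u_i\|_{H^1(D)} \leq C' \|u_0\|_{H^1(D_1)} \sum_{i=I+1}^\infty (C\epsilon)^i,
\]
the series on the right converging precisely for $\epsilon<1/C$. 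Finally I would replace the base norm by the data using the first two bounds of the lemma, $\|u_{-1}\|_{H^1(D_1)}\preceq\|f\|_{H^{-1}(D_1)}$ and $\|u_0\|_{H^1(D_0)}\preceq\|f\|_{H^{-1}(D_0)}+\|u_{-1}\|_{H^1(D_1)}+\|g\|_{H^{1/2}(\partial D)}$, absorbing all fixed factors into $C_1$; this is exactly how the three data norms $\|f\|_{H^{-1}(D_0)}$, $\|f\|_{H^{-1}(D_1)}$ and $\|g\|_{H^{1/2}(\partial D)}$ enter the stated bound. Since the constants $C$ and $C_1$ originate only from the elliptic and trace estimates on $D_0$ and $D_1$ and are independent of $\epsilon$ and of the truncation index, the estimate holds uniformly for every $I\geq 0$, completing the proof.
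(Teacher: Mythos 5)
Your proof is correct and follows essentially the same route as the paper: the corollary is just the tail, starting at $i=I+1$, of the geometric series obtained by iterating the lemma's recursive bounds $\|u_{i}\|_{H^1(D)}\preceq C\|u_{i-1}\|_{H^1(D_1)}$ and then converting the base norms $\|u_0\|$, $\|u_{-1}\|$ into the data norms. The one thing you add is the explicit identification of the $H^1(D)$-limit of the series with the exact solution $u_\epsilon$ (via continuity of the bilinear form, regrouping by powers of $\epsilon$, and uniqueness of the weak solution); the paper leaves this step implicit, but it is indeed needed for the remainder estimate to refer to $u$ rather than merely to the sum of the constructed series.
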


Using the asymptotic 
expansion~\eqref{small-eq:expansion}-\eqref{small-eq:expansionBC} 
we can write an asymptotic problem for $\epsilon\to 0$.
\begin{corollary}\label{cor:cor}
If $f=0$ in $D_1$, we can write
\[
u_\epsilon=u_0+\epsilon u_1+\epsilon^2 u_2+\dots
\]
where $u_0^{(0)}=u_0|_{D_0}$ satisfy 
the following problem 
with Dirichlet data on $\partial D$ 
 and zero Neumann data on $\partial D_1$.
\[
\begin{array}{c}
\int_{D_0}\nabla u_0^{(0)}\cdot \nabla z=\int_{D_0}fz 
\quad \forall z\in H^1_0(D_0) \mbox{ with }
z=0 \mbox{ on } \partial D,\\\\
u_{0}^{(0)}=g \mbox{ on } \partial D\subset \partial D_0.
\end{array}\]
Additionally, we can find 
$u_{0}^{(1)}=u_0|_{D_1}$ by extending harmonically to $D_1$ the 
known Dirichlet data on $\partial D$, that is, 
\begin{equation}
 \int_{D_1}\nabla u_{i-1}^{(1)}\cdot \nabla z=0 \mbox{ for all } 
z\in H^1_0(D_1)  
\end{equation}
with 
\[
u_{0}^{(1)}=u_{0}^{(0)}  \mbox{ on } \partial D_1.
\]
The series converges absolutely in $H^1(D)$ for $\epsilon$ 
sufficiently small.

\end{corollary}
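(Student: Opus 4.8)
The plan is to show that the hypothesis $f=0$ in $D_1$ forces the singular term $u_{-1}$ to vanish, so that the expansion~\eqref{small-eq:expansion} collapses to a series starting at $u_0$, and then to read off the stated characterization of $u_0$ directly from the subdomain problems already derived. First I would recall that the $\epsilon^{-1}$ balance~\eqref{eq:eps-1} already gives $u_{-1}^{(0)}=0$ in $D_0$, while $u_{-1}^{(1)}$ solves the Dirichlet problem~\eqref{eq:DirProbU-1-eps} in $D_1$ with forcing $f$ and zero data on $\partial D_1$. When $f=0$ in $D_1$ this is the homogeneous Dirichlet problem, whose unique solution is $u_{-1}^{(1)}=0$; hence $u_{-1}\equiv 0$ in $D$ and the $\epsilon^{-1}$ term disappears, leaving $u_\epsilon=u_0+\epsilon u_1+\epsilon^2u_2+\dots$.

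Next I would specialize the $\epsilon^0$ balance~\eqref{eq:eps0} to identify $u_0$. Since $u_{-1}=0$, the interface flux $\int_{\partial D_1}\nabla u_{-1}^{(1)}\cdot n_1\,z$ appearing in problem~\eqref{eq:Neumanu1-eps} vanishes, so $u_0^{(0)}$ solves
\[
\int_{D_0}\nabla u_0^{(0)}\cdot\nabla z=\int_{D_0}fz\quad\forall z\in H^1(D_0)\text{ with }z=0\text{ on }\partial D,\qquad u_0^{(0)}=g\text{ on }\partial D,
\]
which is exactly the mixed problem in the statement: testing against $z$ left free on $\partial D_1$ encodes the homogeneous Neumann condition $\nabla u_0^{(0)}\cdot n_0=0$ there, the boundary term having dropped precisely because $u_{-1}$ is gone. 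The function $u_0^{(1)}$ is then obtained from~\eqref{eq:DirforuiD0-eps} with $i=1$: it is the harmonic extension into $D_1$ of the trace of $u_0^{(0)}$ on $\partial D_1$, as claimed.

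Finally, for convergence I would invoke the results already established for the low-conductivity expansion. With $u_{-1}=0$, the estimate of Corollary~\ref{cor:convergence} reads as the absolute convergence in $H^1(D)$ of $\sum_{i=0}^\infty\epsilon^i u_i$ for every $\epsilon<1/C$, the quantitative tail bound carrying over verbatim. The one point deserving care --- and really the only substantive step --- is the first: verifying that dropping the boundary flux term in~\eqref{eq:Neumanu1-eps} genuinely produces the zero Neumann condition on $\partial D_1$ rather than merely a solvability constraint. This is where the hypothesis $f=0$ in $D_1$ is essential, since it is exactly what keeps $u_\epsilon$ bounded as $\epsilon\to 0$ and decouples the limit into a pure homogeneous-Neumann problem on $D_0$ followed by a harmonic continuation into $D_1$.
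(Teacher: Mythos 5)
Your proposal is correct and follows exactly the route the paper intends: the corollary is stated without a separate proof because it is assembled from the preceding derivation, namely $u_{-1}^{(1)}=0$ by uniqueness for the homogeneous Dirichlet problem~\eqref{eq:DirProbU-1-eps} when $f=0$ in $D_1$, the consequent disappearance of the flux term in~\eqref{eq:Neumanu1-eps} yielding the natural (zero Neumann) condition on $\partial D_1$, the harmonic extension~\eqref{eq:DirforuiD0-eps} for $u_0^{(1)}$, and the tail bound of Corollary~\ref{cor:convergence} for convergence. Your remark that testing against $z$ free on $\partial D_1$ is what encodes the homogeneous Neumann condition is exactly the right reading (the statement's ``$z\in H^1_0(D_0)$'' is a slip in the paper for $z\in H^1(D_0)$ with $z=0$ on $\partial D$).
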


\begin{figure}[htb]
\centering
{\psfig{figure=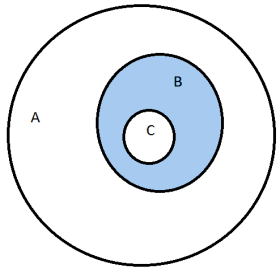,height=4cm,width=4cm,angle=0}}
\caption{Example of disconnected background.
Here $D_0=A\cup C$ and $D_1=B$ is a low-conductivity 
inclusion. See Remark~\ref{rem:disconnected-inclusions}. }
\label{fig:nonconnected-low} 
\end{figure}

When $D_0$ is not connected, the following observation can be made.

\begin{remark}
\label{rem:disconnected-inclusions}
In the case of $D_0$ being disconnected, we have 
that~\eqref{eq:eps-1} implies that $u_{-1}$ 
is constant in each connected component of $D_0$ and it vanishes 
only in the connected components whose boundary intersects 
$\partial D$. The function $u_{-1}$ will be constant 
in the other interior connected components. 
This is similar to the case of high-conductivity 
inclusions. The function $u_{-1}$ will be zero 
only if the forcing term vanishes in these interior 
components also; see Equations~\eqref{ref:constantcu0}
and~\eqref{eq:def:Yi}. For instance, 
consider the case illustrated in 
Figure~\ref{fig:nonconnected-low} where 
the background domain are $D_0=A\cup C$
and the inclusions is given by $D_1=B$. 
In this case it is easy to see that 
$u_{-1}$ satisfies a problem similar 
to problem~\eqref{eq:limitproblem} in $B\cup \overline{C}$.
Then, $u_{-1}$ will vanish only if the forcing 
term vanishes in $B\cup \overline{C}$.
In this case, a result similar to Corollary~\ref{cor:cor}
above can be stated.
\end{remark}

\subsection{Multiple low-conductivity inclusions}\label{small-sec:multiple}

Let $\kappa$ be defined by 
\begin{equation}\label{eq:coeff1inc-eps-multiple}
\kappa(x)=\left\{\begin{array}{cc} 
\epsilon& x\in   D_m, ~~m=1,\dots,M, \\
1,& x\in  D_0=D\setminus \cup_{m=1}^M\overline{D}_m,
\end{array}\right.
\end{equation}
and denote by $u_{\epsilon}$
 the solution of~\eqref{eq:problem} with coefficient 
(\ref{eq:coeff1inc-eps-multiple}).
 We assume that 
$D_i$ is compactly included in the open set $D\setminus 
\cup_{\ell=1, \ell \not=m}^M\overline{D}_\ell$, i.e., 
$\overline{D}_m\subset D\setminus \cup_{\ell=1, \ell\not=m}^M \overline{D}_\ell$, and we define $D_0:= D\setminus 
\cup_{m=1}^M\overline{D}_m$.

Expansion~\eqref{small-eq:expansion}-\eqref{small-eq:expansionBC}
extends easily to this case of multiple low-conductivity inclusions, that is,
\begin{itemize}
\item We have $u_{-1}^{(0)}=0$ on $D_0$. Also, that each,
$u_{-1}^{(\ell)}$ satisfies the following Dirichlet problem in $D_\ell$, 
\begin{equation}\label{eq:DirProbU-1-eps-multiple}
\begin{array}{c}
\int_{D_\ell} \nabla u_{-1}^{(\ell)}\cdot \nabla z =\int_{D_\ell}fz \quad \mbox{ for all } z\in 
H^1_0(D_\ell)\\\\
u_{-1}^{(\ell)}=0 \quad \mbox{ on } \partial D_\ell.
\end{array}
\end{equation}

\item We can obtain $u_0^{(0)}$ in $D_0$ by solving 
the problem
\begin{equation}\label{eq:Neumanu1-eps-multiple}
\begin{array}{c}
\displaystyle \int_{D_0}\nabla u_0^{(0)}\cdot \nabla z=\int_{D_0}fz -
\sum_{\ell=1}^{M}\int_{\partial D_\ell }\nabla u_{-1}^{(\ell)}\cdot n_1  z
 \quad  \forall z\in H^1(D_0), 
z|_{\partial D}=0,\\\\
u_{0}^{(0)}=g \mbox{ on } \partial D.
\end{array}
\end{equation}

\item Finally, we have that 
 $u_i^{(\ell)}$ is harmonic in $D_\ell$ for all $i\geq 0$ and that 
$u_i^{(0)}$ is harmonic in $D_0$ for $i\geq 1$.

Given $u_{i-1}^{(0)}$ in $D_0$,  
we can find 
$u_{i-1}^{(\ell)}$ in $D_\ell$ by  solving
 the following Dirichlet problem with the known Dirichlet data, 
\begin{equation}\label{eq:DirforuiD0-eps-multiple}
\begin{array}{c}\displaystyle 
 \int_{D_\ell}\nabla u_{i-1}^{(\ell)}\cdot \nabla z=0 \mbox{ for all } 
z\in H^1_0(D_\ell)  \\
u_{i-1}^{(\ell)}=u_{i-1}^{(0)}  \mbox{ on } \partial D_\ell.
\end{array}
\end{equation}

To find 
$u_{i}^{(0)}$ in $D_0$ we solve the problem
\begin{equation}\label{eq:Neuforuiplu1D1-eps-multiple}
\begin{array}{c}
\displaystyle \int_{D_0}\nabla u_{i}^{(0)}\cdot \nabla z=-
\sum_{\ell=1}^M\int_{\partial D_\ell }\nabla 
u_{i-1}^{(\ell)}\cdot n_\ell z \quad \forall  
z\in H^1_0(D_0) \mbox{ with } z|_{\partial D}=0,\\\\
u_{i}=0 \mbox{ on } \partial D\subset \partial D_0.
\end{array}
\end{equation}

\end{itemize}
The convergence of the expansion~\eqref{small-eq:expansion}-\eqref{small-eq:expansionBC}
is similar to the case of one low-conductivity inclusion. In 
particular Corollary~\ref{cor:convergence} holds in this case.

\section{An example with low- and high- conductivity inclusions}\label{sec:low-and-high}

In this section we show  an 
example  with a high- and a low-conductivity inclusion.   
The procedures were introduced in detail in
Sections~\ref{sec:high} and~\ref{sec:low}. 
We show only how  to write the subdomains problems for the 
leading terms of the expansion. 

Consider $\kappa$ to be defined by 
\begin{equation}\label{hl-eq:coeff1inc}
\kappa(x)=\left\{\begin{array}{cc} 
\eta,& x\in   D_1,  \\
{1}/{\eta},& x \in   D_2,  \\
1,& x\in  D_0=D\setminus (\overline{D}_1\cup \overline{D}_2).
\end{array}\right.
\end{equation}

As before we write $
u_\eta=\eta u_{-1}+u_0+\frac{1}{\eta}u_1+\frac{1}{\eta^2}u_2+\dots=
\sum_{i=-1}^\infty \eta^{-i} u_{i}, $
with $u_0=g$ on  $\partial D$ and 
$ u_{i}=0$ on $\partial D$ for  $i\not = 0$.
We need that \begin{eqnarray*}
&&\eta^2\int_{D_1} \nabla u_{-1}\cdot \nabla v+
\eta\left( \int_{D_0}\nabla u_{-1}\cdot \nabla v+
\int_{D_1} \nabla u_{0} \cdot \nabla v\right)+\\
&&\quad \sum_{i=0}^\infty 
\eta^{i}\left(
 \int_{D_0}\nabla u_{i}\cdot \nabla v+\int_{D_1}\nabla u_{i+1}\cdot \nabla v+
\int_{D_2}\nabla u_{i-1}\cdot \nabla v\right)
=\int_{D}fv.
\end{eqnarray*}
Now we equate powers and analyze the subdomain equations.
We assume  that 
$D_i$ is connected and compactly included in $D$ ($\overline{D}_i\subset D$), 
$i=1,2$.  
We also assume that the distance between $D_1$ and  $D_2$
is strictly positive, then 
 $u_{-1}^{(0)}=0$, $u_{-1}^{(1)}=0$ and  
$u_{-1}^{(2)}$ solves the following Dirichlet problem in $D_1$, 
\begin{equation}\label{lh-eq:DirProbU-1-eps}
\begin{array}{c}
\int_{D_1} \nabla u_{-1}^{(2)}\cdot \nabla z =\int_{D_1}fz \quad \mbox{ for all } z\in 
H^1_0(D_1)\\\\
u_{-1}^{(2)}=0 \mbox{ on } \partial D_1.
\end{array}
\end{equation}
This defines the function $u_{-1}$. 
To write a problem for $u_{0}$, let 
\[
V_{const}=\{ v\in H^1(D\setminus \overline{D}_2), \mbox{ such that } 
v=0 \mbox{ on } \partial D \mbox{ and }  v^{(1)}=v|_{D_1} \mbox{ is  constant }\}.
\]
We have that 
\begin{equation}\label{hl-eq:limitproblem}
\begin{array}{c} 
\int_{D_0}\nabla u_{0}\cdot \nabla z=\int_{D}fz -
\int_{\partial D_2} \nabla u_{-1}^{(2)}\cdot n_2 z 
\mbox{ for all } z\in V_{const},\\\\
u_0=g \mbox{ on } \partial D.
\end{array}
\end{equation}

The problem above can be analyzed using the
 \emph{harmonic characteristic function} 
$\chi_{D_1}\in H^1_0(D)$ defined in~\eqref{eq:def:chiDl}
with $M=2$. The solution of the asymptotic 
problem above gives $u_0^{(0)}$ and the constant function
$u_0^{(1)}$. As before,  the constant 
$u_0^{(1)}$ can be determined explicitly using 
an expression similar to~\eqref{ref:constantcu0-v2}. 
 To complete the definition 
of $u_{0}$ we observe that $u_{0}^{(2)}$ satisfies 
the following Dirichlet problem with the known Dirichlet data, 
\begin{equation}\label{hl-eq:DirforuiD0-eps}
\begin{array}{c}
 \int_{D_2}\nabla u_{0}^{(2)}\cdot \nabla z=0 \mbox{ for all } 
z\in H^1_0(D_1)  \\\\
u_{0}^{(2)}=u_{0}^{(0)}  \mbox{ on } \partial D_1.
\end{array}
\end{equation}

The functions $u_{i}$, $i=1,\dots$, can be determined form the 
equation 
\[
 \int_{D_0}\nabla u_{i}\cdot \nabla v+\int_{D_1}\nabla u_{i+1}\cdot \nabla v+
\int_{D_2}\nabla u_{i-1}\cdot \nabla v=0 \quad \mbox{ for all } v\in H^1_0(D).
\]
This procedure is similar to the ones developed before
and   presented in detail 
in Sections~\ref{sec:high} and~\ref{sec:low}. As 
before, $u_{i}$ is  harmonic in each region. 
Its restriction to subregions can be determined by solving 
subdomain problems involving Dirichlet, Neumann  or mixed boundary 
conditions on the inclusions boundaries.

\section{Conclusions and comments}\label{sec:conclusions}

We use asymptotic expansions to study high-contrast 
problems. 
We  derive and analyze asymptotic power series for high-contrast elliptic problems. We mostly consider the case 
of binary media with interior isolated inclusions. 
High- or low-conductivity inclusion configurations are considered.
The  coefficients in the expansions are determined 
sequentially  by a Dirichlet-to-Neumann procedure. In the case 
of high-conductivity inclusions, 
the Neumann problem needs to satisfy a 
compatibility of fluxes. This flux-compatibility condition is obtained using
an auxiliary finite dimensional projection
problem. The related finite dimensional space 
is spanned by harmonic extension of characteristic functions
of each subdomain.

{The 
asymptotic  limits when the 
contrast increases to infinity are  recovered and analyzed}; 
see Theorems~\ref{thm:convergence-one-high} and 
\ref{thm:convergence-multiple-high} and Corollary 
\ref{cor:cor}. 
The convergence of the expansions in $H^1(D)$ 
is obtained provided 
that the contrast is larger than a constant $C$ that 
depends on the background domains and the domains 
representing the inclusions. The convergence rate is algebraic.
We consider the 
case of isolated interior inclusions which can be high and low
conductivities.
Other more complex configurations  can be analyzed
following a similar procedure. The analysis 
covers the cases with high- and low-conductivity inclusions. 
See   Figure~\ref{fig:figure2} for schematic representations
of two dimensional 
configurations. If the low-conductivity value 
is $\epsilon$ and the high-conductivity 
value is $\eta=1/\epsilon$, then, an 
expansion similar to~\eqref{small-eq:expansion}-\eqref{small-eq:expansionBC} 
can be used for all the examples in Figure~\ref{fig:figure2}.
For general values of $\eta(\epsilon)=\epsilon^{-\rho}$, 
an 
expansion similar to~\eqref{small-eq:expansion}-\eqref{small-eq:expansionBC} 
can also be derived where the coefficients in front of spatial terms
will scale as $\epsilon^{l + m\rho}$, where $l\geq -1$ and $m\geq 0$ are integers.
\begin{figure}[htb]
\centering
{\psfig{figure=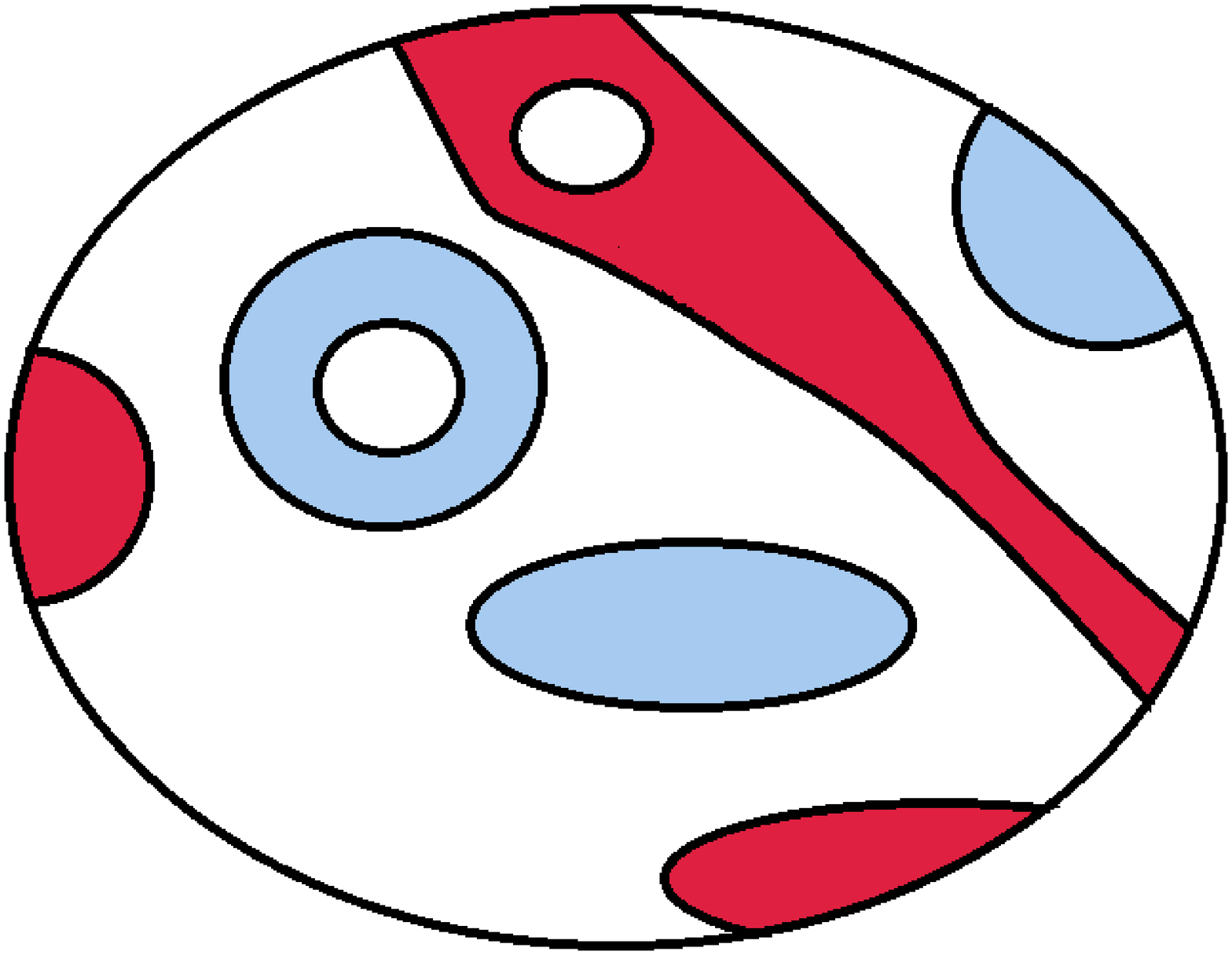,height=3cm,width=4cm,angle=0}}
{\psfig{figure=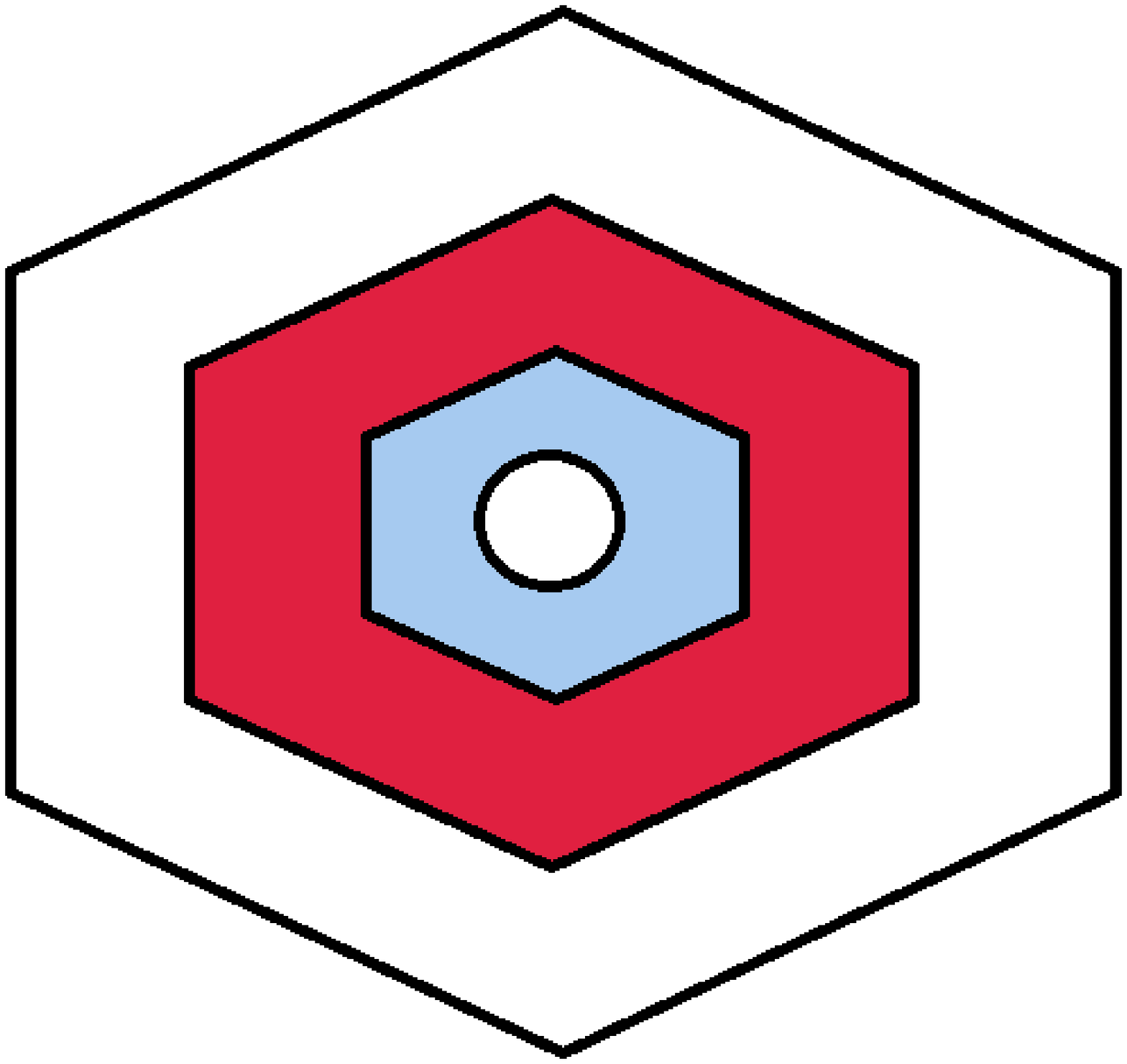,height=3cm,width=4cm,angle=0}}
{\psfig{figure=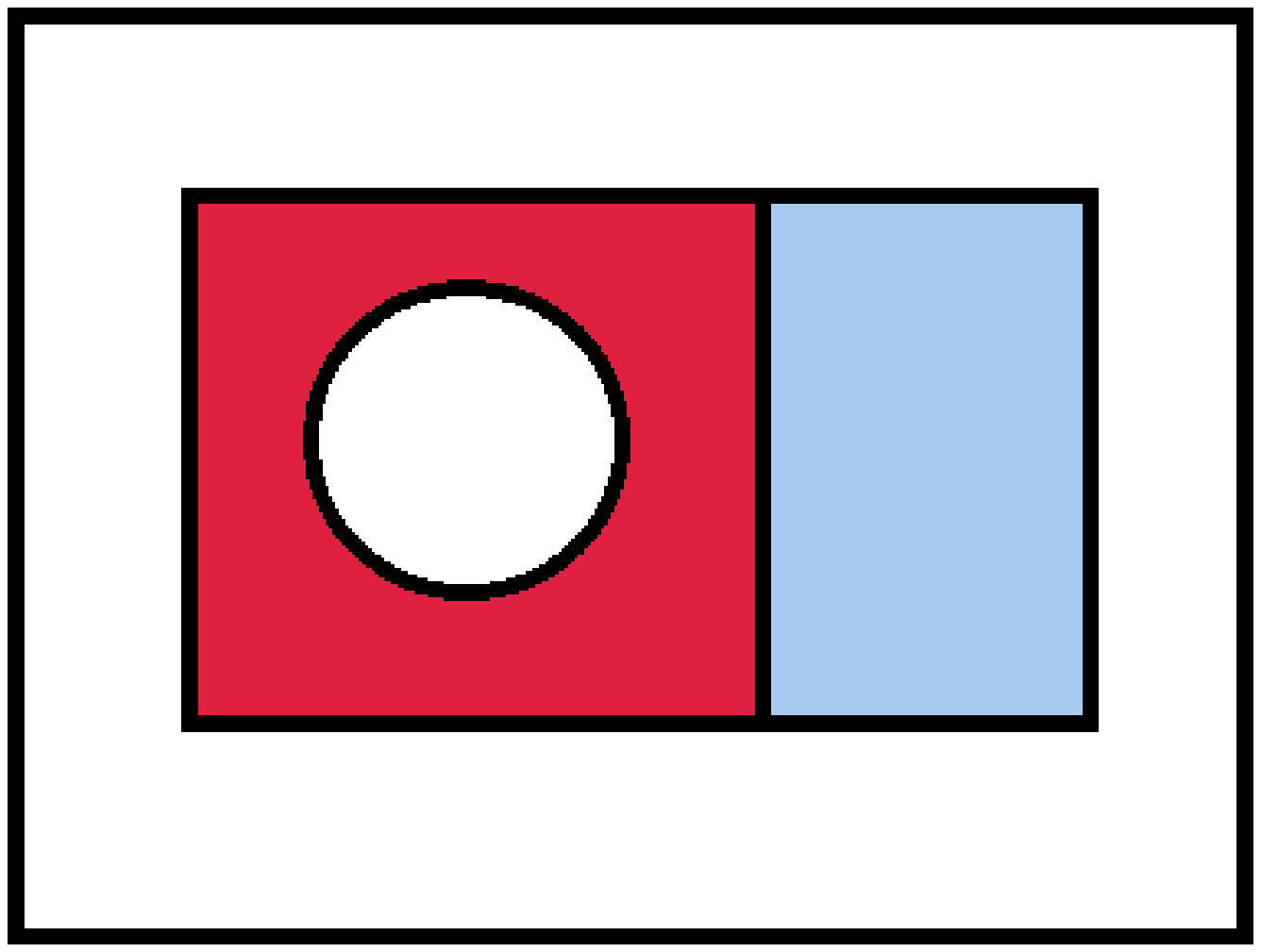,height=3cm,width=4cm,angle=0}}
{\psfig{figure=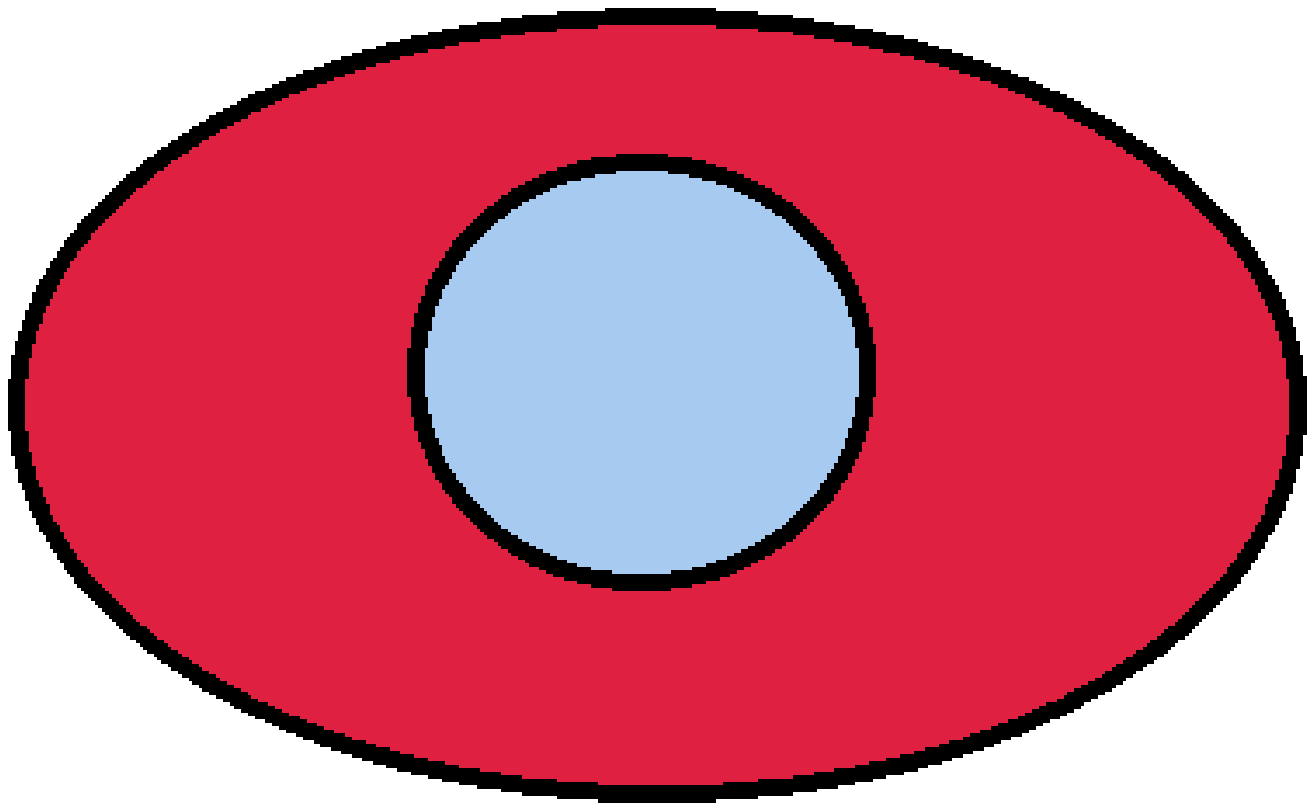,height=3cm,width=4cm,angle=0}}
\caption{Four different configurations with  
high-conductivity $1/\epsilon$ (red) and low-conductivity 
$\epsilon$ (blue) regions. 
Expansion~\eqref{small-eq:expansion} 
can be obtained in these cases. }
\label{fig:figure2} 
\end{figure}

More general coefficients can also be studied. 
Similar expansions for other problems  
related with flows in high-contrast multiscale 
media can be obtained, e.g., models like 
heat conduction, wave propagation, Darcy or Brinkman flow, and elasticity 
problems. 
Efficient solution techniques for solving the system of linear
equations~\eqref{eq:Ageom} will be a subject of future research, 
in particular, localization procedures 
for the harmonic characteristic functions will be studied.
Questions concerning the convergence of the 
series in stronger norms as well as 
computing quantities of interest will be studied 
in the future. Reduced contrast approximation and 
related multiscale methods as in~\cite{ce10,cgh09} will be the subject 
of future studies.

\section*{Acknowledgments}
This publication is based in part on work supported by Award No. KUS-C1-016-04, made by King Abdullah University of Science and Technology (KAUST).
\bibliographystyle{plain}

\def\cprime{$'$}

\end{document}